\providecommand{\U}[1]{\protect\rule{.1in}{.1in}}
\newtheorem{theorem}{Theorem}
\theoremstyle{plain}
\newtheorem{corollary}{Corollary}
\newtheorem{definition}{Definition}
\newtheorem{example}{Example}
\newtheorem{lemma}{Lemma}
\newtheorem{proposition}{Proposition}
\newtheorem{remark}{Remark}
\numberwithin{equation}{section}
\begin{document}
\title[Semi $n$-submodules]{Semi $n$-submodules of modules over commutative rings}
\author{Hani Khashan}
\address{Department of Mathematics, Faculty of Science, Al al-Bayt University, Al
Mafraq, Jordan.}
\email{hakhashan@aabu.edu.jo.}
\author{Ece Yetkin Celikel}
\address{Department of Software Engineering, Faculty of Engineering, Hasan Kalyoncu
University, Gaziantep, Turkey.}
\email{ece.celikel@hku.edu.tr, yetkinece@gmail.com.}
\thanks{This paper is in final form and no version of it will be submitted for
publication elsewhere.}
\subjclass[2020]{ 13A15, 13A99.}
\keywords{Semi $n$-ideal, $n$-ideal, $n$-submodule, semi $n$-submodule.}

\begin{abstract}
Let $R$ be a commutative ring with identity and $M$ a unitary $R$-module. The
purpose of this paper is to introduce the concept of semi-$n$-submodules as an
extension of semi $n$-ideals and $n$-submodules. A proper submodule $N$ of $M$
is called a semi $n$-submodule if whenever $r\in R$, $m\in M$ with $r^{2}m\in
N$, $r\notin\sqrt{0}$ and $Ann_{R}(m)=0$, then $rm\in N$. Several properties,
characterizations of this class of submodules with many supporting examples
are presented. Furthermore, semi $n$-submodules of amalgamated modules are investigated.

\end{abstract}
\maketitle

\section{Introduction}

Throughout this paper, unless otherwise stated, $R$ is a commutative ring with
identity and $M$ is a unital $R$-module. Let $N$ be a submodule of an
$R$-module $M$ and $I$ be an ideal of $R$. By $Z(R)$, $reg(R),$ $\sqrt{0}$,
$Z(M)$, and $rad(N),$ we denote the set of zero-divisors of $R$, the set of
regular elements in $R$, the nil-radical of $R$, the set of all zero divisors
on $M$; i.e. $\{r\in R:rm=0$ for some $0\neq m\in M\}$ and the intersection of
all prime submodules of $M$ containing $N$, respectively. The residual $N$ by
$M$ is defined as the set $(N:_{R}M)=\{r\in R:rM\subseteq N\}$ which is an
ideal of $R$. In particular, for $m\in M$, we denote the ideals $(0:_{R}%
M)~$and $(0:_{R}m)$ by $Ann_{R}(M)$ and $Ann_{R}(m)$, respectively. The
residual $N$ by $I$ is the set $(N:_{M}I)=\{m\in M:Im\subseteq N\}$ which is a
submodule of $M$ containing $N$. More generally, for any subset $S\subseteq
R$, $(N:_{M}S)$ is a submodule of $M$ containing $N$.

The concept of prime submodules, which is an important subject in module
theory, has been widely studied by various authors. Recall that a proper
submodule $N$ of an $R$-module $M$ is a prime (resp. primary) submodule if for
$r\in R$ and $m\in M$ whenever $rm$ $\in N$, then $r\in(N:_{R}M)$ (resp.
$r\in\sqrt{(N:_{R}M)}$ or $m\in N$.\ For the sake of completeness we give some
definitions which will be used in the sequel. In \cite{Sar}, generalizing
prime submodules, the concept of semiprime submodules is first introduced. A
proper submodule $N$ of $M$ is called a semiprime submodule if for $r\in R$
and $m\in M$ whenever $r^{2}m\in N$, then $rm\in N.$ On the other hand, in
2015, R. Mohamadian \cite{Moh} introduced the concept of $r$-ideals of
commutative rings. A proper ideal $I$ of a ring $R$ is called an $r$-ideal if
whenever $a,$ $b\in R$ such that $ab\in I$ and $Ann_{R}(a)=0$, then $b\in I$
where $Ann_{R}(a)=\left\{  b\in R:ab=0\right\}  $. Afterwards, in 2017, Tekir,
Koc and Oral \cite{Tek} introduced the concept of $n$-ideals as a special kind
of $r$-ideals by considering the set of nilpotent elements instead of zero
divisors. Recently, in \cite{HE3} and \cite{HE4}, Khashan and Celikel
generalized $n$-ideal and $r$-ideals by defining and studying the classes of
semi $n$-ideals and semi $r$-ideals. A proper ideal $I$ of $R$ is called a
semi $n$-ideal (resp. semi $r$-ideal) if for $a\in R$, $a^{2}\in I$ and
$a\notin\sqrt{0}$ (resp. $Ann_{R}(a)=0$) imply $a\in I$. Later, some other
generalizations of $n$-ideals and $r$-ideals have been introduced, see for
example, \cite{Hani}, \cite{Haniece}, \cite{Haniece2} and \cite{Ece}.

In module theory, various extensions of these concepts have been studied. For
example, a proper submodule $N$ of $M$ is called an $r$-submodule (resp.
$n$-submodule) if whenever $rm\in N$ and $Ann_{M}(r)=0_{M}$ (resp.
$r\notin\sqrt{Ann_{R}(M)})$, then $m\in N$ \cite{Suat} (resp. \cite{Tek}). As
a generalization of $r$-submodules, semi $r$-submodules are introduced in
\cite{HE4}. A proper submodule $N$ of $M$ is called a semi $r$-submodule if
whenever $r\in R$, $m\in M$ with $r^{2}m\in N$, $Ann_{M}(r)=0_{M}$ and
$Ann_{R}(m)=0$, then $rm\in N$.

The aim of the paper is to introduce semi $n$-submodules as an extension of
both of semi $n$-ideals and $n$-submodules. We give many properties,
characterizations, and examples of this class of submodules. Among many
results in this paper, in Section 2, we start by giving some examples to
illustrate the place of this class of submodules in the literature (see
Example \ref{ex5})$.$ Then we study several characterizations of semi
$n$-submodules (see Theorem \ref{char1}, Theorem \ref{char}, Corollary
\ref{corr} and Corollary \ref{(N:M)}). We investigate the behavior of semi
$n$-submodules under homomorphisms, localizations, and finite Cartesian
product (see Proposition \ref{fsub}, Theorem \ref{SM} and Theorem \ref{cart}).
We conclude this section by clarifying the relation between semi
$n$-submodules of an $R$-module $M$ and the semi $n$-ideals in the
idealization ring $R(+)M$ of $M$ (see Theorem \ref{Ide}).

Let $f:$ $R_{1}\rightarrow R_{2}$ be a ring homomorphism, $J$ be an ideal of
$R_{2}$, $M_{1}$ be an $R_{1}$-module, $M_{2}$ be an $R_{2}$-module and
$\varphi:M_{1}\rightarrow M_{2}$ be an $R_{1}$-module homomorphism. The
subring
\[
R_{1}\Join^{f}J=\left\{  (r,f(r)+j):r\in R_{1}\text{, }j\in J\right\}
\]
of $R_{1}\times R_{2}$ is called the amalgamation of $R_{1}$ and $R_{2}$ along
$J$ with respect to $f$. The amalgamation of $M_{1}$ and $M_{2}$ along $J$
with respect to $\varphi$ is defined as%

\[
M_{1}\Join^{\varphi}JM_{2}=\left\{  (m_{1},\varphi(m_{1})+m_{2}):m_{1}\in
M_{1}\text{ and }m_{2}\in JM_{2}\right\}
\]
which is an $(R_{1}\Join^{f}J)$-module. In Section 3, we determine when are
some kinds of submodules of $M_{1}\Join^{\varphi}JM_{2}$ $n$-submodules and
semi $n$-submodules.

\section{Properties of Semi $n$-submodules}

In this section, among other results concerning the general properties of semi
$n$-submodules, some characterizations of this notion will be investigated.
Moreover, the relations among semi $n$-submodules and some other types of
submodules will be clarified. First, we present the fundamental definition of
semi $n$-submodules which will be studied in this paper.

\begin{definition}
Let $M$ be an $R$-module and $N$ a proper submodule of $M$. We call $N$ a semi
$n$-submodule if whenever $r\in R$, $m\in M$ with $r^{2}m\in N$, $r\notin
\sqrt{0}$ and $Ann_{R}(m)=0$, then $rm\in N$.
\end{definition}

We can easily observe that semi $n$-submodules of an $R$-module $R$ are the
same as semi $n$-ideals of $R$. Moreover, clearly the zero submodule is always
a semi $n$-submodule of $M.$ Since for $0\neq r\in R$, $Ann_{M}(r)=0_{M}$
implies $r\notin\sqrt{0}$, then any semi $n$-submodule of $M$ is a semi
$r$-submodule. In the following diagram, we illustrate the relations between
semi $n$-submodules and some other types of submodules.$\bigskip$

\begin{center}
$%
\begin{array}
[c]{ccc}%
n\text{-submodule} & \longrightarrow & r\text{-submodule}\\
\downarrow &  & \downarrow\\
\text{semi }n\text{-submodule} & \longrightarrow & \text{semi }%
r\text{-submodule}\\
\uparrow &  & \\
\text{semiprime submodule} &  &
\end{array}
\bigskip$
\end{center}

In the following examples, we show that the arrows in the above diagram are irreversible.

\begin{example}
\label{ex5}
\end{example}

\begin{enumerate}
\item By \cite[Example 1]{Suat}, for $k\geq2$, any proper submodule of the $%
%TCIMACRO{\U{2124} }%
%BeginExpansion
\mathbb{Z}
%EndExpansion
$-module $%
%TCIMACRO{\U{2124} }%
%BeginExpansion
\mathbb{Z}
%EndExpansion
_{k}$ is an $r$-submodule. Moreover, by definition, every proper submodule of
$%
%TCIMACRO{\U{2124} }%
%BeginExpansion
\mathbb{Z}
%EndExpansion
_{k}$ is also a semi $n$-submodule. On the other hand, if $k$ is not a power
of a prime, then $%
%TCIMACRO{\U{2124} }%
%BeginExpansion
\mathbb{Z}
%EndExpansion
_{k}$ has no $n$-submodules. Indeed, suppose say, $k=p_{1}^{m_{1}}p_{2}%
^{m_{2}}$ where $p_{1}$ and $p_{2}$ are distinct integers and $m_{1},m_{2}%
\geq1$. Let $N=\left\langle \bar{p}_{1}^{t_{1}}\bar{p}_{2}^{t_{2}%
}\right\rangle $ be a proper submodule of $%
%TCIMACRO{\U{2124} }%
%BeginExpansion
\mathbb{Z}
%EndExpansion
_{k}$. If, say, $t_{1}=0$, then $p_{2}^{t_{2}}.\bar{1}\in N$ with
$p_{2}^{t_{2}}\notin\sqrt{Ann_{%
%TCIMACRO{\U{2124} }%
%BeginExpansion
\mathbb{Z}
%EndExpansion
}(%
%TCIMACRO{\U{2124} }%
%BeginExpansion
\mathbb{Z}
%EndExpansion
_{k})}=\left\langle p_{1}p_{2}\right\rangle $ and $\bar{1}\notin N$. If
$t_{1}\neq0$ and $t_{2}\neq0$, then $p_{1}^{t_{1}}.\bar{p}_{2}^{t_{2}}$ $\in
N$ with $p_{1}^{t_{1}}\notin\sqrt{Ann_{%
%TCIMACRO{\U{2124} }%
%BeginExpansion
\mathbb{Z}
%EndExpansion
}(%
%TCIMACRO{\U{2124} }%
%BeginExpansion
\mathbb{Z}
%EndExpansion
_{k})}$ and $\bar{p}_{2}^{t_{2}}\notin N$. Therefore, $N$ is not an
$n$-submodule of $%
%TCIMACRO{\U{2124} }%
%BeginExpansion
\mathbb{Z}
%EndExpansion
_{k}$.

\item For a prime integer $p$, consider the $%
%TCIMACRO{\U{2124} }%
%BeginExpansion
\mathbb{Z}
%EndExpansion
$-module
\[
M=\left\{  \frac{r}{p^{t}}+%
%TCIMACRO{\U{2124} }%
%BeginExpansion
\mathbb{Z}
%EndExpansion
:r\in%
%TCIMACRO{\U{2124} }%
%BeginExpansion
\mathbb{Z}
%EndExpansion
\text{, }t\in%
%TCIMACRO{\U{2115} }%
%BeginExpansion
\mathbb{N}
%EndExpansion
\cup\left\{  0\right\}  \right\}
\]
Then any nonzero proper submodule of $M$ is of the form%
\[
N_{t_{0}}=\left\{  \frac{r}{p^{t_{0}}}+%
%TCIMACRO{\U{2124} }%
%BeginExpansion
\mathbb{Z}
%EndExpansion
:r\in%
%TCIMACRO{\U{2124} }%
%BeginExpansion
\mathbb{Z}
%EndExpansion
\right\}
\]
where $t_{0}\in%
%TCIMACRO{\U{2115} }%
%BeginExpansion
\mathbb{N}
%EndExpansion
\cup\left\{  0\right\}  $, \cite{Sharp}. It is shown in \cite[Example 2]{Suat}
that any proper submodule of $M$ is an $r$-submodule. However, we show that
$N_{t_{0}}$ is never $n$-submodule for all $t_{0}\in%
%TCIMACRO{\U{2115} }%
%BeginExpansion
\mathbb{N}
%EndExpansion
\cup\left\{  0\right\}  $. Indeed, we note that $\sqrt{Ann_{%
%TCIMACRO{\U{2124} }%
%BeginExpansion
\mathbb{Z}
%EndExpansion
}(M)}=\left\{  0\right\}  $ since if $a\in\sqrt{Ann_{%
%TCIMACRO{\U{2124} }%
%BeginExpansion
\mathbb{Z}
%EndExpansion
}(M)}$, then $a^{m}(\frac{1}{1}+0)=a^{m}=0$ for some $m\in%
%TCIMACRO{\U{2115} }%
%BeginExpansion
\mathbb{N}
%EndExpansion
$ and so $a=0$. Now, for all $t_{0}\in%
%TCIMACRO{\U{2115} }%
%BeginExpansion
\mathbb{N}
%EndExpansion
\cup\left\{  0\right\}  $, we have $p.(\frac{1}{p^{t_{0}+1}})\in N_{t_{0}}$
but $p\notin\sqrt{Ann_{%
%TCIMACRO{\U{2124} }%
%BeginExpansion
\mathbb{Z}
%EndExpansion
}(M)}$ and $\frac{1}{p^{t_{0}+1}}\notin N_{t_{0}}$.

\item Consider the $%
%TCIMACRO{\U{2124} }%
%BeginExpansion
\mathbb{Z}
%EndExpansion
$-module $M=%
%TCIMACRO{\U{2124} }%
%BeginExpansion
\mathbb{Z}
%EndExpansion
_{8}\times%
%TCIMACRO{\U{2124} }%
%BeginExpansion
\mathbb{Z}
%EndExpansion
$. Then the submodule $N=\left\langle \bar{0}\right\rangle \times\left\langle
4\right\rangle $ is a semi $r$-submodule of $M$ that is not semi
$n$-submodule. Indeed, let $r\in%
%TCIMACRO{\U{2124} }%
%BeginExpansion
\mathbb{Z}
%EndExpansion
$ and $m=(m_{1},m_{2})\in M$ such that $r^{2}\cdot m\in N$, $Ann_{M}(r)=0_{M}$
and $Ann_{%
%TCIMACRO{\U{2124} }%
%BeginExpansion
\mathbb{Z}
%EndExpansion
}(m)=0$. Then $r^{2}\cdot m_{1}=\bar{0}$, $r^{2}\cdot m_{2}\in\left\langle
4\right\rangle $, $m_{2}\neq0$ and $\gcd(r,8)=1$. Since $\bar{0}$ is a primary
submodule of the $%
%TCIMACRO{\U{2124} }%
%BeginExpansion
\mathbb{Z}
%EndExpansion
$-module $%
%TCIMACRO{\U{2124} }%
%BeginExpansion
\mathbb{Z}
%EndExpansion
_{8}$ and $r^{2}\notin\sqrt{0}$, then $m_{1}=\bar{0}$. Also, since
$\left\langle 4\right\rangle $ is a primary ideal of $%
%TCIMACRO{\U{2124} }%
%BeginExpansion
\mathbb{Z}
%EndExpansion
$ and $r^{2}\notin\sqrt{\left\langle 4\right\rangle }$, then $m_{2}%
\in\left\langle 4\right\rangle $. It follows that $(m_{1},m_{2})\in N$ and $N$
is a semi $r$-submodule of $M$. On the other hand, we have $2^{2}\cdot(\bar
{0},1)\in N$, $2\notin\sqrt{0}$ and $Ann_{%
%TCIMACRO{\U{2124} }%
%BeginExpansion
\mathbb{Z}
%EndExpansion
}(\bar{0},1)=0$ but $2.(\bar{0},1)\notin N$ and so $N$ is not a semi
$n$-submodule of $M$.\ 
\end{enumerate}

As a first result, we give the following characterizations of semi $n$-submodules.

\begin{theorem}
\label{char1}Let $M$ be an $R$-module and $N$ a proper submodule of $M$. Then
the following statements are equivalent.
\end{theorem}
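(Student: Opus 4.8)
The plan is to establish the equivalences by showing that each alternative condition is merely a repackaging of the defining one, treating the single-element definition as the base case and the ideal-theoretic and residual forms as its ideal- and quotient-level consolidations. Throughout I would lean on the elementary monotonicity $rm\in N\Rightarrow r^{2}m\in N$, that is $(N:_{M}r)\subseteq(N:_{M}r^{2})$ for every $r\in R$, so that in any residual-style condition only the reverse containment has content, and that reverse containment, restricted to elements $m$ with $Ann_{R}(m)=0$ and applied for $r\notin\sqrt{0}$, is precisely the definition.

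The implications \emph{into} the defining condition are routine, so I would dispatch them first. To recover the definition from an ideal-theoretic statement of the form ``$I\not\subseteq\sqrt{0}$, $Ann_{R}(m)=0$ and $I^{2}m\subseteq N$ imply $Im\subseteq N$'', I would specialize to the principal ideal $I=(r)$, using that $(r)\not\subseteq\sqrt{0}$ exactly when $r\notin\sqrt{0}$ and that $r^{2}m\in N$ forces $(r)^{2}m=Rr^{2}m\subseteq N$; the residual reformulation unwinds by merely rewriting $r^{2}m\in N$ as $m\in(N:_{M}r^{2})$ and $rm\in N$ as $m\in(N:_{M}r)$.

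The step that carries the real work, and where I expect the argument to stall, is the \emph{forward} passage from single elements to ideals. Given $I^{2}m\subseteq N$ with $I\not\subseteq\sqrt{0}$ and $Ann_{R}(m)=0$, I must prove $bm\in N$ for every $b\in I$, including those $b\in\sqrt{0}$, where the definition cannot be invoked directly. The key device is a translation trick: fixing some $a\in I\setminus\sqrt{0}$, for a nilpotent $b\in I$ one has $a+b\in I\setminus\sqrt{0}$ since $\sqrt{0}$ is an ideal, and $(a+b)^{2}m\in I^{2}m\subseteq N$; applying the definition to $a+b$ and to $a$ in turn yields $(a+b)m\in N$ and $am\in N$, whence $bm\in N$ by subtraction. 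Making this bookkeeping uniform over all generators of $I$, and confirming that the hypotheses $r\notin\sqrt{0}$ and $Ann_{R}(m)=0$ persist at each invocation, is the delicate part; here the faithfulness of $m$ is used only to legitimize each appeal to the definition and is not itself disturbed. Once this forward direction is secured, the remaining implications close immediately and the full chain of equivalences follows.
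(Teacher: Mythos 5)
Your proposal does not prove the theorem that is actually stated. The three equivalent conditions in the paper are: (1) $N$ is a semi $n$-submodule; (2) whenever $r\in R$, $m\in M$, $k\in\mathbb{N}$ with $r^{k}m\in N$, $r\notin\sqrt{0}$ and $Ann_{R}(m)=0$, then $rm\in N$; (3) for every $m\in M$ with $Ann_{R}(m)=0$, $\sqrt{(N:_{R}m)}=\sqrt{0}\cup(N:_{R}m)$. You instead prove equivalence of (1) with an ideal-theoretic condition ($I^{2}m\subseteq N$, $I\not\subseteq\sqrt{0}$, $Ann_{R}(m)=0$ imply $Im\subseteq N$) and with the verbatim rewriting $(N:_{M}r^{2})=(N:_{M}r)$. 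Your translation trick is fine as far as it goes: for nilpotent $b\in I$ and a fixed $a\in I\setminus\sqrt{0}$ one indeed has $a+b\notin\sqrt{0}$, so $(a+b)m\in N$ and $am\in N$ give $bm\in N$ by subtraction, and this correctly establishes your ideal version. But neither of your two conditions is among the theorem's, and the actual content of the theorem is absent: your trick handles sums of a nilpotent and a non-nilpotent element, whereas condition (2) requires descending from $r^{k}m\in N$ to $rm\in N$ for arbitrary $k$, and condition (3) is a radical equality for the ideal $(N:_{R}m)$ of $R$, not a statement about the submodules $(N:_{M}r)$ — these are genuinely different objects, and (3) cannot be obtained by "mere rewriting."

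The paper's proof of (1)$\Rightarrow$(2) is an induction on the exponent: if $k=2l$ is even, then $(r^{l})^{2}m\in N$ with $r^{l}\notin\sqrt{0}$ gives $r^{l}m\in N$, and the induction hypothesis applies to $l<k$; if $k$ is odd, write $k+1=2s$ with $s<k$, note $(r^{s})^{2}m=r\cdot r^{k}m\in N$ because $N$ is a submodule, conclude $r^{s}m\in N$, and descend again. Then (2)$\Rightarrow$(3) is immediate (any $r\in\sqrt{(N:_{R}m)}$ satisfies $r^{k}m\in N$ for some $k$, so by (2) either $r\in\sqrt{0}$ or $rm\in N$), and (3)$\Rightarrow$(1) is a one-line unwinding. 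Your approach could be salvaged, since your ideal version does imply (2) by iteration: from $r^{k}m\in N$ take $I=(r^{s})$ with $s=\lceil k/2\rceil$, so that $I^{2}m\subseteq N$ and your condition yields $r^{s}m\in N$, and repeating halves the exponent until it reaches $1$. But that iteration is exactly the induction your write-up never performs, and without it neither (2) nor (3) is reached.
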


\begin{enumerate}
\item $N$ is a semi $n$-submodule of $M$.

\item Whenever $r\in R$, $m\in M$, $k\in%
%TCIMACRO{\U{2115} }%
%BeginExpansion
\mathbb{N}
%EndExpansion
$ with $r^{k}m\in N$, $r\notin\sqrt{0}$ and $Ann_{R}(m)=0$, then $rm\in N$.

\item For all $m\in M$, $\sqrt{(N:_{R}m)}=\sqrt{0}\cup(N:_{R}m)$ whenever
$Ann_{R}(m)=0.$
\end{enumerate}

\begin{proof}
(1)$\Rightarrow$(2) Suppose $r^{k}m\in N$, $r\notin\sqrt{0}$ and
$Ann_{R}(m)=0$ for $r\in R$, $m\in M$ and $k\in%
%TCIMACRO{\U{2115} }%
%BeginExpansion
\mathbb{N}
%EndExpansion
$. We use the mathematical induction on $k$. If $k\leq2$, then the claim is
clear. We now assume that the result is true for all $2\lneqq t\lneqq k$ and
show that it is also true for $k.$ Suppose $k$ is even, say, $k=2l$ for some
positive integer $l.$ Since $r^{k}m=(r^{l})^{2}m\in N$ and clearly
$r^{l}\notin\sqrt{0}$, then $r^{l}m\in N$ as $N$ is a semi $n$-submodule of
$M$. By the induction hypothesis, we conclude that $rm\in N$ as needed.
Suppose $k$ is odd, so that $k+1=2s$ for some $s\lneqq k$. Then similarly, we
have $\left(  r^{s}\right)  ^{2}m\in N$ and $r^{s}\notin\sqrt{0}$ which imply
that $r^{s}m\in N$ and again by the induction hypothesis, we conclude $rm\in
N$.

(2)$\Rightarrow$(3) Let $m\in M$ such that $Ann_{R}(m)=0$. Let $r\in
\sqrt{(N:_{R}m)}$ so that $r^{k}m\in N$ for some positive integer $k.$ If
$r\notin\sqrt{0}$, then by our assumption (2), we have $rm\in N$, and so
$r\in(N:_{R}m).$ Therefore, $r\in\sqrt{0}\cup(N:_{R}m)$ and $\sqrt{(N:_{R}%
m)}\subseteq\sqrt{0}\cup(N:_{R}m)$. The reverse inclusion is clear and so the
equality holds.

(3)$\Rightarrow$(1) Let $r\in R$, $m\in M$ with $r^{2}m\in N$, $r\notin
\sqrt{0}$ and $Ann_{R}(m)=0$. As $r\in\sqrt{(N:_{R}m)}=\sqrt{0}\cup(N:_{R}m)$,
we have clearly $r\in(N:_{R}m)$ and $rm\in N$, as needed.
\end{proof}

Let $M$ be an $R$-module. Recall that an element $m\in M$ is said to be
torsion if there exists a nonzero $r\in R$ such that $rm=0$ and the set of
torsion elements of $M$ is denoted by $T(M).$ Also, recall that $M$ is called
torsion (resp. torsion-free) if $T(M)=M$ (resp. $T(M)=\{0\}$). Moreover, it is
clear that any torsion-free module is faithful. One can observe that a proper
submodule $N$ of a torsion-free $R$-module $M$ is semi $n$-submodule if and
only if $(N:_{M}r^{2})=(N:_{M}r)$ for all non-nilpotent $r\in R$.

Next, we give a further characterization for semi $n$-submodules over integral domains:

\begin{theorem}
\label{char}Let $R$ be a ring and $N$ be a proper submodule of an $R$-module
$M$. If $N$ is a semi $n$-submodule of $M$, then for $r\in R$ and a submodule
$K$ of $M$, $r^{2}K\subseteq N$, $r\notin\sqrt{0}$ and $T(K)=\{0_{M}\}$ imply
$rK\subseteq N$. Moreover, the converse holds if $R$ is an integral domain.
\end{theorem}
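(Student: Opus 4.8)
The plan is to treat the two implications separately. The forward direction is a routine element-wise application of the definition, while the converse hinges on a cyclic-submodule construction in which the integral-domain hypothesis plays the decisive role.

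For the forward implication, suppose $N$ is a semi $n$-submodule and take $r\in R$ together with a submodule $K$ satisfying $r^{2}K\subseteq N$, $r\notin\sqrt{0}$ and $T(K)=\{0_{M}\}$. I would verify $rm\in N$ for each $m\in K$. If $m=0_{M}$ this is immediate, so assume $m\neq 0_{M}$. Since $K$ is torsion-free, $m\notin T(K)$ forces $Ann_{R}(m)=0$. Combined with $r^{2}m\in r^{2}K\subseteq N$ and $r\notin\sqrt{0}$, the definition of semi $n$-submodule yields $rm\in N$. As $m$ was arbitrary, $rK\subseteq N$.

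For the converse, assume $R$ is an integral domain and that the stated implication holds for every $r$ and every submodule $K$. To check the definition, take $r\in R$ and $m\in M$ with $r^{2}m\in N$, $r\notin\sqrt{0}$ and $Ann_{R}(m)=0$. The idea is to apply the hypothesis to the cyclic submodule $K=Rm$. First, $r^{2}K=R(r^{2}m)\subseteq N$ because $r^{2}m\in N$ and $N$ is a submodule. Next I would confirm $T(K)=\{0_{M}\}$: if $am\in K$ is torsion, say $b(am)=0_{M}$ with $b\neq 0$, then $(ba)m=0_{M}$ gives $ba\in Ann_{R}(m)=0$, and since $R$ is a domain and $b\neq 0$ we obtain $a=0$, whence $am=0_{M}$. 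Thus the hypothesis applies and $rK\subseteq N$; since $m\in K$, this gives $rm\in rK\subseteq N$, as required.

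The only delicate point is establishing $T(Rm)=\{0_{M}\}$ in the converse, and this is exactly where the domain assumption is indispensable: the cancellation $ba=0$, $b\neq 0\Rightarrow a=0$ fails in the presence of zero divisors, so without it the cyclic submodule $Rm$ need not be torsion-free and the hypothesis could not be invoked. Everything else amounts to unwinding the definitions.
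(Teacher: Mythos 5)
Your proof is correct and follows essentially the same route as the paper's: the forward direction applies the definition elementwise, using torsion-freeness of $K$ to get $Ann_{R}(m)=0$ for nonzero $m\in K$, and the converse applies the hypothesis to the cyclic submodule $K=Rm$, using $Ann_{R}(m)=0$ together with the integral-domain assumption to show $T(Rm)=\{0_{M}\}$. No gaps; your handling of the $m=0_{M}$ case and the torsion computation match the paper's argument step for step.
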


\begin{proof}
Suppose that $N$ is a semi $n$-submodule of $M$. Assume for $r\in R$ and a
submodule $K$ of $M$, we have $r^{2}K\subseteq N$, $r\notin\sqrt{0}$ and
$T(K)=\{0_{M}\}$. Let $0_{M}\neq k\in K.$ Then, $r^{2}k\in N$ and clearly
$Ann_{R}(k)=\{0_{R}\}.$ Since $N$ is semi $n$-submodule, we have $rk\in N$ for
all $k\in K$ and so $rK\subseteq N.$ Conversely, suppose $R$ is an integral
domain and let $r\in R$, $m\in M$ with $r^{2}m\in N$, $r\notin\sqrt{0}$ and
$Ann_{R}(m)=0$. If we put $K=Rm$, then $r^{2}K\subseteq N$ and $T(K)=\{0_{M}%
\}.$ Indeed, let $r^{\prime}m\in T(K)$ and choose $0\neq s\in R$ such that
$sr^{\prime}m=0_{M}$. As $Ann_{R}(m)=0$, we get $sr^{\prime}=0$, and so
$r^{\prime}\in Z(R)=\{0\}.$ Thus, $r^{\prime}m=0_{M}$. By assumption, we
conclude $rm\in rK\subseteq N,$ as needed.
\end{proof}

\begin{corollary}
\label{corr}Let $M$ be a torsion-free $R$-module and $N$ be a proper submodule
of $M.$\ Then the following statements are equivalent.
\end{corollary}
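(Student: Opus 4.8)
The plan is to exploit the fact that torsion-freeness renders every annihilator and torsion hypothesis in the earlier characterizations automatic, so that the general statements collapse into the cleaner equivalent forms collected in the corollary. First I would record the standing reduction: since $M$ is torsion-free, $T(M)=\{0_M\}$, and hence $Ann_R(m)=0$ holds for every nonzero $m\in M$ (and trivially for $m=0_M$). Consequently the hypothesis $Ann_R(m)=0$ in the defining condition of a semi $n$-submodule may be dropped entirely, and for the same reason the hypothesis $T(K)=\{0_M\}$ in Theorem \ref{char} is satisfied by \emph{every} submodule $K$ of $M$. This single observation is what powers all of the equivalences.

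Next I would set up the cycle of implications among the expected conditions. For a residual-type statement such as $(N:_M r^2)=(N:_M r)$ for all $r\notin\sqrt{0}$, I would note that the inclusion $(N:_M r)\subseteq(N:_M r^2)$ is automatic, so the content lies in the reverse inclusion; unwinding the definition of the residual, $(N:_M r^2)\subseteq(N:_M r)$ says exactly that $r^2 m\in N$ forces $rm\in N$ whenever $r\notin\sqrt{0}$, which by the torsion-free reduction is precisely the semi $n$-submodule property. The equivalence with any higher-power form $(N:_M r^k)=(N:_M r)$ would then follow directly from the implication $(1)\Rightarrow(2)$ of Theorem \ref{char1}, read in residual notation, since that theorem already upgrades $r^2m\in N\Rightarrow rm\in N$ to $r^km\in N\Rightarrow rm\in N$.

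Finally, for a submodule-theoretic condition (that $r^2K\subseteq N$ and $r\notin\sqrt{0}$ imply $rK\subseteq N$ for every submodule $K$), I would invoke Theorem \ref{char} in both directions. The forward direction is immediate once $T(K)=\{0_M\}$ is supplied for free by torsion-freeness; the converse specializes $K=Rm$ and runs exactly as in the proof of Theorem \ref{char}, which goes through here because the integral-domain hypothesis was used there only to guarantee $T(Rm)=\{0_M\}$, a fact now available directly. I do not anticipate a substantive obstacle: the only points requiring care are keeping track of the direction of the residual inclusions and verifying, condition by condition, that each annihilator or torsion assumption in the cited results is genuinely made vacuous by the standing hypothesis that $M$ is torsion-free.
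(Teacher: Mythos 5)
Your proposal proves the wrong equivalences. The three statements collected under Corollary \ref{corr} in the paper are: (1) $N$ is a semi $r$-submodule of $M$; (2) $N$ is a semiprime submodule of $M$; (3) $N$ is a semi $n$-submodule of $M$. Your cycle instead relates the semi $n$-submodule property to residual conditions of the form $(N:_{M}r^{2})=(N:_{M}r)$ for non-nilpotent $r$ and to the submodule condition of Theorem \ref{char}; that is essentially the observation the paper records just before Theorem \ref{char}, not the content of the corollary. Nothing in your argument mentions semi $r$-submodules or semiprime submodules, so in particular the one nontrivial implication --- that a semi $r$-submodule of a torsion-free module is semiprime, which requires discharging the hypothesis $Ann_{M}(r)=0_{M}$ as well as $Ann_{R}(m)=0$ and disposing of the degenerate cases $r=0$ and $m=0_{M}$ --- is never carried out. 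The paper handles it by citing \cite[Proposition 7]{HE4}, and gets the remaining implications, semiprime $\Rightarrow$ semi $n$ $\Rightarrow$ semi $r$, from the arrows of the diagram in Section 2.

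That said, your guiding idea is the right engine for a self-contained proof of the actual statement: for a torsion-free module $M\neq 0$ one has $Ann_{R}(m)=0$ for every $0_{M}\neq m\in M$ and $Ann_{M}(r)=0_{M}$ for every $0\neq r\in R$, so all annihilator hypotheses appearing in the three definitions hold automatically when $r$ and $m$ are nonzero, while if $r=0$ or $m=0_{M}$ the conclusion $rm=0_{M}\in N$ is trivial; running this through each definition gives all three equivalences directly. One correction on that score: your parenthetical claim that $Ann_{R}(m)=0$ holds ``trivially for $m=0_{M}$'' is false, since $Ann_{R}(0_{M})=R$; the correct way to dismiss that case is that the conclusion $rm\in N$ holds trivially, not that the hypothesis does.
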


\begin{enumerate}
\item $N$ is a semi $r$-submodule of $M.$

\item $N$ is a semiprime submodule of $M.$

\item $N$ is a semi $n$-submodule of $M.$
\end{enumerate}

\begin{proof}
(1)$\Rightarrow$(2) Follows by \cite[Proposition 7]{HE4}.

(2)$\Rightarrow$(3) and (3)$\Rightarrow$(1) are clear from the above diagram.
\end{proof}

\begin{corollary}
\label{(N:M)2}Let $R$ be a ring and $M$ be a torsion-free $R$-module. If $N$
is a semi $n$-submodule of $M$, then $(N:_{R}M)$ is a semi $n$-ideal of $R.$
\end{corollary}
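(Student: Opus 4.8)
The plan is to reduce the statement to a single application of Theorem \ref{char}, taking the whole module $M$ itself as the submodule $K$. First I would record the easy point that $(N:_{R}M)$ is a \emph{proper} ideal of $R$: since $N$ is a proper submodule, $M\not\subseteq N$, hence $1\notin(N:_{R}M)$ and therefore $(N:_{R}M)\neq R$. This is needed because a semi $n$-ideal is, by definition, required to be proper.

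Next, to check the defining condition of a semi $n$-ideal, I would take an arbitrary $a\in R$ with $a^{2}\in(N:_{R}M)$ and $a\notin\sqrt{0}$, and aim to conclude $a\in(N:_{R}M)$. Unwinding the residual, the hypothesis $a^{2}\in(N:_{R}M)$ says exactly that $a^{2}M\subseteq N$. The key observation is that $M$ being torsion-free means precisely $T(M)=\{0_{M}\}$, so the submodule $K=M$ satisfies all three hypotheses of Theorem \ref{char}, namely $a^{2}K\subseteq N$, $a\notin\sqrt{0}$, and $T(K)=\{0_{M}\}$. Since $N$ is assumed to be a semi $n$-submodule of $M$, Theorem \ref{char} then gives $aM=aK\subseteq N$, that is, $a\in(N:_{R}M)$. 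As $a$ was arbitrary, this establishes that $a^{2}\in(N:_{R}M)$ together with $a\notin\sqrt{0}$ force $a\in(N:_{R}M)$, so $(N:_{R}M)$ is a semi $n$-ideal of $R$.

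I do not expect any genuine obstacle, since the result is essentially the $K=M$ specialization of Theorem \ref{char} combined with the propriety of $(N:_{R}M)$. If one wished to avoid citing Theorem \ref{char} and argue straight from the definition, the only point needing care is that torsion-freeness guarantees $Ann_{R}(m)=0$ for every nonzero $m\in M$ (a nonzero annihilator would exhibit $m$ as a nonzero torsion element, contradicting $T(M)=\{0_{M}\}$); one then applies the semi $n$-submodule property to each nonzero $m$ with $a^{2}m\in N$ and $a\notin\sqrt{0}$ to get $am\in N$, while the case $m=0$ is trivial because $a\cdot 0_{M}=0_{M}\in N$. Either route gives $aM\subseteq N$ and hence the claim.
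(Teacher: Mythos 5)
Your proposal is correct and takes essentially the same route as the paper: the paper's proof also applies the forward direction of Theorem \ref{char} with $K=M$, using torsion-freeness to get $T(M)=\{0_{M}\}$, together with the (stated as obvious) propriety of $(N:_{R}M)$. Your added remark that the forward direction of Theorem \ref{char} needs no integral-domain hypothesis, and your alternative definition-level argument, are both fine but do not change the substance.
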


\begin{proof}
Suppose that $N$ is a semi $n$-submodule of $M$. Note that clearly,
$(N:_{R}M)$ is proper in $R$. Let $r\in R$ such that $r^{2}\in(N:_{R}M)$ and
$r\notin\sqrt{0}.$ Then $r^{2}M\subseteq N$ and $T(M)=\left\{  0_{M}\right\}
$ imply $rM\subseteq N$ by Theorem \ref{char}. Thus, $r\in(N:_{R}M)$.
\end{proof}

Recall that an $R$-module $M$ is called a multiplication module if every
submodule $N$ of $M$ has the form $IM$ for some ideal $I$ of $R$. In this
case, we have $N=(N:_{R}M)M$. Now, to prove the converse part of Corollary
\ref{(N:M)2} in finitely generated multiplication modules, we need to state
the following two lemmas.

\begin{lemma}
\label{Smith}\cite{Smith} Let $N$ be a submodule of a finitely generated
faithful multiplication $R$-module $M.$ For an ideal $I$ of $R$,
$(IN:_{R}M)=I(N:_{R}M)$, and in particular, $(IM:_{R}M)=I$.
\end{lemma}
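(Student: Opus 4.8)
The plan is to prove the two inclusions of the identity $(IN:_{R}M)=I(N:_{R}M)$ separately and to read off the special case $(IM:_{R}M)=I$ as the instance $N=M$, where $(M:_{R}M)=R$. The inclusion $I(N:_{R}M)\subseteq(IN:_{R}M)$ is routine: any element of $I(N:_{R}M)$ is a finite sum $\sum a_{i}b_{i}$ with $a_{i}\in I$ and $b_{i}\in(N:_{R}M)$, and since $b_{i}M\subseteq N$ we get $\left(\sum a_{i}b_{i}\right)M\subseteq\sum a_{i}N\subseteq IN$, so $\sum a_{i}b_{i}\in(IN:_{R}M)$. The whole content of the lemma is therefore the reverse inclusion.

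For the reverse inclusion I would first use the multiplication hypothesis to write $N=(N:_{R}M)M$, so that $IN=I(N:_{R}M)M=KM$ where $K:=I(N:_{R}M)$. Then $(IN:_{R}M)=(KM:_{R}M)$, and the statement collapses to the cancellation-type identity $(KM:_{R}M)=K$ for an arbitrary ideal $K$: applied to $K=I(N:_{R}M)$ it gives $(IN:_{R}M)=I(N:_{R}M)$, and applied to $K=I$ it gives the displayed special case. So everything reduces to showing $(KM:_{R}M)=K$. Here $K\subseteq(KM:_{R}M)$ is immediate, so the real task is $(KM:_{R}M)\subseteq K$, i.e. that $rM\subseteq KM$ forces $r\in K$.

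This last point is the main obstacle, and it is where the full strength of \emph{finitely generated}, \emph{faithful}, and \emph{multiplication} must be used. The naive determinant trick is not enough: writing $M=Rm_{1}+\cdots+Rm_{n}$ and expressing $rm_{i}=\sum_{j}a_{ij}m_{j}$ with $a_{ij}\in K$, the adjugate argument together with faithfulness only yields $\det(rI_{n}-A)=0$, hence $r^{n}\in K$, which is integrality of $r$ over $K$ rather than membership. To obtain genuine membership I would argue locally, invoking the structural heart of the theory (due to El-Bast and Smith): for a finitely generated multiplication module $M$ the localization $M_{\mathfrak{m}}$ is cyclic over $R_{\mathfrak{m}}$ at every maximal ideal $\mathfrak{m}$, and since $M$ is finitely generated, faithfulness passes to localizations via $Ann_{R_{\mathfrak{m}}}(M_{\mathfrak{m}})=Ann_{R}(M)_{\mathfrak{m}}=0$, so $M_{\mathfrak{m}}\cong R_{\mathfrak{m}}$ is free of rank one. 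Localizing $rM\subseteq KM$ at $\mathfrak{m}$ then gives $\frac{r}{1}M_{\mathfrak{m}}\subseteq K_{\mathfrak{m}}M_{\mathfrak{m}}$ inside a free rank-one module, whence $\frac{r}{1}\in K_{\mathfrak{m}}$. Since an ideal is the contraction of the intersection of its localizations, holding at every maximal ideal forces $r\in K$. Assembling these steps yields $(KM:_{R}M)=K$, and hence both assertions of the lemma.
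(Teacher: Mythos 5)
The paper offers no proof of this lemma at all: it is quoted verbatim from Smith's 1988 paper \cite{Smith}, so there is no internal argument to compare yours against, and the useful question is whether your blind reconstruction actually works. It does. The easy inclusion is routine, and your reduction of everything to the cancellation identity $(KM:_{R}M)=K$ --- via $N=(N:_{R}M)M$, which is exactly the property of multiplication modules the paper records right after stating the lemma --- is clean and correct. Your diagnosis of why this is not a triviality is the key point: from $rM\subseteq KM$ the determinant trick plus faithfulness only gives $\det(rI_{n}-A)=0$, hence $r^{n}\in K$, i.e. $r\in\sqrt{K}$, which is strictly weaker than $r\in K$; cancellation for finitely generated faithful multiplication modules is a genuine theorem, not an exercise. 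Your local-global repair is sound and rests on four standard facts, each invoked correctly and none circular with the statement being proved: localizations of multiplication modules are multiplication (every submodule of $S^{-1}M$ is $S^{-1}N$ for $N$ the contraction); a finitely generated multiplication module over a local ring is cyclic (Nakayama applied to $M/\mathfrak{m}M$, which has no proper nonzero submodules); $Ann_{R_{\mathfrak{m}}}(M_{\mathfrak{m}})=(Ann_{R}M)_{\mathfrak{m}}$ for finitely generated $M$, so cyclic plus faithful forces $M_{\mathfrak{m}}\cong R_{\mathfrak{m}}$; and membership in an ideal can be checked at every maximal ideal (consider $(K:_{R}r)$, which is proper if $r\notin K$ yet meets the complement of any $\mathfrak{m}$ containing it whenever $r/1\in K_{\mathfrak{m}}$). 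This is essentially the El-Bast--Smith route to the cancellation law. What your approach buys is a self-contained justification of a lemma the paper merely outsources; what it costs is the importation of the local cyclicity theorem, which is where the real content sits --- a fair trade, and a correct proof.
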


\begin{lemma}
\cite{Majed}\label{Majed} Let $N$ be a submodule of a faithful multiplication
$R$-module $M$. If $I$ is a finitely generated faithful multiplication ideal
of $R$, then $N=(IN:_{M}I)$.
\end{lemma}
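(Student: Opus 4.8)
The plan is to prove the two inclusions separately, the inclusion $N\subseteq(IN:_{M}I)$ being immediate and the reverse being the substance. For the easy direction, note that for any $n\in N$ we have $In\subseteq IN$, so $n\in(IN:_{M}I)$; hence $N\subseteq(IN:_{M}I)$ needs nothing beyond $I$ being an ideal. Because this inclusion already holds globally, for the reverse inclusion it suffices to check the equality $(IN:_{M}I)=N$ after localizing at an arbitrary maximal ideal $\mathfrak{m}$ of $R$ and then invoke the local-global principle.

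So the core of the plan is to compute the localization $(IN:_{M}I)_{\mathfrak{m}}$. Since $I$ is finitely generated, both the product and the residual commute with localization, giving $(IN:_{M}I)_{\mathfrak{m}}=\big(I_{\mathfrak{m}}N_{\mathfrak{m}}:_{M_{\mathfrak{m}}}I_{\mathfrak{m}}\big)$. Now I would exploit the two structural hypotheses on $I$: as a finitely generated multiplication ideal it is locally cyclic, so $I_{\mathfrak{m}}=tR_{\mathfrak{m}}$ is principal; and as a finitely generated faithful ideal its annihilator commutes with localization, whence $Ann_{R_{\mathfrak{m}}}(t)=0$, i.e. $t$ is a regular element of $R_{\mathfrak{m}}$. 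The problem thus reduces to showing $\big(tN_{\mathfrak{m}}:_{M_{\mathfrak{m}}}t\big)=N_{\mathfrak{m}}$, which follows at once if multiplication by $t$ is injective on $M_{\mathfrak{m}}$: if $tx\in tN_{\mathfrak{m}}$ then $tx=tn$ for some $n\in N_{\mathfrak{m}}$, so $t(x-n)=0$ and $x=n\in N_{\mathfrak{m}}$.

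The main obstacle is precisely this injectivity of multiplication by $t$ on $M_{\mathfrak{m}}$, and it is here that the faithfulness of $M$ (not merely of $I$) must enter. The statement genuinely fails otherwise: for $R=\mathbb{Z}$, the finitely generated faithful multiplication ideal $I=2\mathbb{Z}$, and the non-faithful multiplication module $M=\mathbb{Z}_{2}$ with $N=0$, one has $(I\cdot 0:_{M}I)=(0:_{M}I)=M\neq(0)=N$. To exclude this I would appeal to the local structure of multiplication modules: at each $\mathfrak{m}$ either $M_{\mathfrak{m}}=0$, where the desired equality is trivial, or $M_{\mathfrak{m}}$ is cyclic, and in the latter case the faithfulness of $M$ together with $t$ being a regular element of $R_{\mathfrak{m}}$ is what forces $t$ to act injectively on $M_{\mathfrak{m}}$. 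Pinning down this local injectivity carefully is the delicate step; once it is established, the equalities $(IN:_{M}I)_{\mathfrak{m}}=N_{\mathfrak{m}}$ hold at every maximal ideal, and the local-global principle delivers $(IN:_{M}I)=N$.
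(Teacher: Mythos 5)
Your overall architecture is sound, and note that the paper itself never proves this lemma (it is quoted verbatim from \cite{Majed}), so your argument must stand entirely on its own. The easy inclusion, the commutation of the residual with localization for finitely generated $I$, the local principality $I_{\mathfrak{m}}=tR_{\mathfrak{m}}$, the regularity of $t$, the counterexample showing faithfulness of $M$ is indispensable, and the final local-global step are all correct. The gap is exactly where you admit it is, and it is genuine: the ingredients you propose for the crucial local injectivity --- faithfulness of $M$, the fact that $M_{\mathfrak{m}}$ is zero or cyclic, and regularity of $t$ in $R_{\mathfrak{m}}$ --- are provably insufficient. Take $R=\mathbb{Z}$, $M=\bigoplus_{q}\mathbb{Z}/q\mathbb{Z}$ (sum over all primes $q$), $I=p\mathbb{Z}$ and $\mathfrak{m}=p\mathbb{Z}$. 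Then $M$ is faithful, $M_{\mathfrak{m}}\cong\mathbb{Z}/p\mathbb{Z}$ is cyclic and nonzero, and $t=p$ is regular in $\mathbb{Z}_{(p)}$, yet multiplication by $t$ on $M_{\mathfrak{m}}$ is the zero map. (Of course this $M$ is not a multiplication module --- the single component $\mathbb{Z}/2\mathbb{Z}$ is not of the form $JM$ --- and the lemma itself fails for it, since $(I\cdot 0:_{M}I)\neq 0$.) So no amount of care can ``pin down'' the injectivity from the data you allow yourself: being locally cyclic is strictly weaker than being a multiplication module, and you must use the multiplication property of $M$ itself, globally, not merely its local shadow.

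The repair is short and shows exactly how the hypotheses combine. First prove globally that $(0:_{M}I)=0$: if $Ix=0$, write $Rx=CM$ for some ideal $C$ of $R$ (here the multiplication property is used in earnest); then $ICM=I(Rx)=0$, so $IC\subseteq Ann_{R}(M)=0$ by faithfulness of $M$, whence $C\subseteq Ann_{R}(I)=0$ by faithfulness of $I$, and $x\in Rx=CM=0$. Now, since $I$ is finitely generated, residuals commute with localization, so
\[
\ker\left(t:M_{\mathfrak{m}}\rightarrow M_{\mathfrak{m}}\right)=(0:_{M_{\mathfrak{m}}}I_{\mathfrak{m}})=\left(0:_{M}I\right)_{\mathfrak{m}}=0,
\]
which is precisely the injectivity your argument needs; with this in place, the rest of your proof goes through verbatim.
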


\begin{theorem}
\label{IM}Let $M$ be a finitely generated multiplication $R$-module and $N=IM$
be a submodule of $M$.
\end{theorem}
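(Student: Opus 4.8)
The plan is to prove that $N=IM$ is a semi $n$-submodule of $M$ exactly when $I=(N:_{R}M)$ is a semi $n$-ideal of $R$, using the multiplication hypothesis to move freely between submodules of $M$ and ideals of $R$. The two lemmas above supply the needed dictionary: Lemma \ref{Smith} gives $(IM:_{R}M)=I$ (so that $I$ genuinely equals $(N:_{R}M)$ and is a proper ideal) together with the residuation rule $(JM:_{R}M)=J$ for an arbitrary ideal $J$, and the multiplication identity $K=(K:_{R}M)M$ upgrades this to an order isomorphism, namely $K_{1}\subseteq K_{2}$ if and only if $(K_{1}:_{R}M)\subseteq(K_{2}:_{R}M)$. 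I would record this correspondence first, since every later step crosses it.

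For the implication that $N$ a semi $n$-submodule forces $I$ to be a semi $n$-ideal, I would take $a\in R$ with $a^{2}\in I$ and $a\notin\sqrt{0}$. Then $a^{2}M\subseteq N$, and applying Theorem \ref{char} to the submodule $K=M$ yields $aM\subseteq N$, that is $a\in(N:_{R}M)=I$. In fact this direction is nothing more than Corollary \ref{(N:M)2} read through $(N:_{R}M)=I$, so I would simply invoke it.

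For the converse, assume $I$ is a semi $n$-ideal and let $r\in R$, $m\in M$ satisfy $r^{2}m\in N$, $r\notin\sqrt{0}$ and $Ann_{R}(m)=0$; the goal is $rm\in N$. I would set $J=(Rm:_{R}M)$, so that $Rm=JM$, and observe that $J$ is faithful: if $sJ=0$ then $sRm=sJM=0$, whence $sm=0$ and $s=0$ by $Ann_{R}(m)=0$. Multiplying $r^{2}m\in N$ through by $R$ and residuating by Lemma \ref{Smith} converts the hypothesis into $r^{2}J\subseteq I$, while the order isomorphism shows the target $rm\in N$ is equivalent to $rJ\subseteq I$; Lemma \ref{Majed} is available to transport such residual containments back to membership statements about $N$ whenever an explicit reconstruction $N=(IN:_{M}I)$ is convenient. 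Thus the whole converse collapses to the purely ideal-theoretic implication $r^{2}J\subseteq I\Rightarrow rJ\subseteq I$.

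This last implication is where I expect the real work to lie. For each $j\in J$ one has $(rj)^{2}=(r^{2}j)j\in I$, so the defining property of a semi $n$-ideal delivers $rj\in I$ as soon as $rj\notin\sqrt{0}$; the genuine obstacle is the degenerate case $rj\in\sqrt{0}$, which the square condition $(rj)^{2}\in I$ cannot by itself rule out. To dispose of it I would lean on the standing hypothesis that $M$ is faithful and torsion-free — which, combined with faithfulness, forces $R$ to be an integral domain and, by the computation above, makes $J$ torsion-free as well — so that $\sqrt{0}=\{0\}$ and the bad case degenerates to $rj=0\in I$. Phrased intrinsically, this step is exactly Theorem \ref{char} applied to $R$ as a module over itself with $K=J$, and it is precisely why the converse inherits the integral-domain clause already present there. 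Assembling the pieces gives $rJ\subseteq I$, hence $rm\in N$, which completes the argument.
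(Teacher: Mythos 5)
Your proposal follows essentially the paper's own route. Part (1) is literally the paper's proof: identify $I=(IM:_{R}M)=(N:_{R}M)$ via Lemma \ref{Smith} and invoke Corollary \ref{(N:M)2}. For part (2), the paper verifies the submodule-level criterion of Theorem \ref{char} for an arbitrary submodule $K=JM$ and sets $A=rJ$, while you work straight from the definition with $K=Rm$ and $J=(Rm:_{R}M)$; in both cases Lemma \ref{Smith} reduces everything to the same ideal-level implication $r^{2}J\subseteq I\Rightarrow rJ\subseteq I$, settled by the semi $n$-ideal property together with the vanishing of $\sqrt{0}$. If anything, your handling of the degenerate case is cleaner than the paper's: the paper asserts that every $a=rj\in A$ satisfies $a\notin\sqrt{0}$ (false as stated, e.g.\ for $j=0$) and argues from $0=r^{k}j^{k}M\subseteq r^{k}K$, an inclusion that cannot force $r^{k}K=0$; you instead split into cases and observe that over a domain $rj\in\sqrt{0}$ simply means $rj=0\in I$.

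Two points in your write-up need repair, though both are one-line fixes. First, hypothesis bookkeeping: the two parts of the theorem carry different hypotheses---torsion-freeness of $M$ in (1), $R$ an integral domain in (2)---and your converse leans on ``the standing hypothesis that $M$ is faithful and torsion-free,'' which is not assumed in (2). Fortunately all you actually use is $\sqrt{0}=\{0\}$, which the genuine hypothesis of (2) gives at once; and the faithfulness of $M$ needed to apply Lemma \ref{Smith} is also available in your argument, because $Ann_{R}(m)=0$ forces $Ann_{R}(M)=0$ (this deduction should be made explicit, since part (2) nowhere assumes $M$ faithful). Second, you never check that $N=IM$ is proper in $M$, which the definition of semi $n$-submodule requires; the paper does this in one line: if $IM=M$, then $I=(IM:_{R}M)=R$ by Lemma \ref{Smith}, contradicting the properness of the semi $n$-ideal $I$.
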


\begin{enumerate}
\item If $M$ is torsion-free\textbf{ }and $N$ is a semi $n$-submodule of $M$,
then $I$ is a semi $n$-ideal of $R$.

\item If $R$ is an integral domain and $I$ is a semi $n$-ideal of $R$, then
$N$ is a semi $n$-submodule of $M$.
\end{enumerate}

\begin{proof}
(1) Suppose $N=IM$ is a semi $n$-submodule of $M$. Then $(N:_{R}%
M)=(IM:_{R}M)=I$ by Lemma \ref{Smith} and so, $I$ is a semi $n$-ideal by
Corollary \ref{(N:M)2}..

(2) Suppose that $R$ is an integral domain and $I$ is a semi $n$-ideal of $R$.
Note that $N=IM$ is proper in $M$ since otherwise by Lemma \ref{Smith}, we get
$I=(IM:_{R}M)=R$ which is a contradiction. Let $r\in R$ and $K=JM$ be a
nonzero submodule of $M$ such that $r^{2}K=r^{2}JM\subseteq IM$, $r\notin
\sqrt{0}$ and $T(K)=\{0_{M}\}.$ Take $A=rJ$ and note that $A^{2}%
\subseteq(r^{2}JM:M)\subseteq(IM:_{R}M)=I$ by Lemma \ref{Smith}. Let $a\in A$.
Then $a^{2}\in I$ and $a\notin\sqrt{0}$. Indeed, if $a=rj\in\sqrt{0}$, then
$0=r^{k}j^{k}M\subseteq r^{k}JM=r^{k}K$ for some $k\in%
%TCIMACRO{\U{2115} }%
%BeginExpansion
\mathbb{N}
%EndExpansion
$. Since $K\neq0$ and $T(K)=\{0_{M}\}$, then $r^{k}=0$ which is a
contradiction. By assumption, we have $a\in I$ and so $A\subseteq I$.
Therefore, $rK=rJM=AM\subseteq IM$ and $N$ is a semi $n$-submodule of $M$ by
Theorem \ref{char}.
\end{proof}

In view of Corollary \ref{(N:M)2} and Theorem \ref{IM}, we conclude the
following relationship between semi $n$-submodules of a module $M$ and and
their residuals in $M$.

\begin{corollary}
\label{(N:M)}Let $R$ be a ring and $M$ be a finitely generated torsion-free
multiplication $R$-module. For a submodule $N$ of $M$, the following
statements are equivalent.

\begin{enumerate}
\item $N$ is a semi $n$-submodule of $M$.

\item $(N:_{R}M)$ is a semi $n$-ideal of $R$.

\item $N=IM$ for some semi $n$-ideal $I$ of $R$.
\end{enumerate}
\end{corollary}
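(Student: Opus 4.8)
The plan is to prove the cycle of implications $(1)\Rightarrow(2)\Rightarrow(3)\Rightarrow(1)$, feeding off the two results just established. Before running the cycle, I would record one preliminary observation that makes the hypotheses line up. Since $N$ is a proper submodule, $M$ is nonzero, and a nonzero torsion-free module is faithful, so $Ann_{R}(M)=0$. I would then upgrade this to the statement that $R$ is an integral domain: if $ab=0$ with $a\neq0$, then by faithfulness there is $m\in M$ with $am\neq0$, while $b(am)=(ba)m=0_{M}$; since $T(M)=\{0_{M}\}$ this forces $am=0_{M}$ unless $b=0$, and the former contradicts $am\neq0$, so $b=0$. Hence $R$ has no nonzero zero-divisors. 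This is the conceptual crux of the argument, since it is what licenses the use of Theorem \ref{IM}(2), whose hypothesis demands that $R$ be a domain even though the corollary only assumes $R$ is a ring.

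With $R$ now known to be an integral domain, the three implications are short. For $(1)\Rightarrow(2)$ I would simply invoke Corollary \ref{(N:M)2}, which applies because $M$ is torsion-free. For $(2)\Rightarrow(3)$ I would use that $M$ is a multiplication module, so $N=(N:_{R}M)M$; setting $I=(N:_{R}M)$ gives a semi $n$-ideal by hypothesis $(2)$ together with $N=IM$, which is exactly $(3)$. For $(3)\Rightarrow(1)$, given $N=IM$ with $I$ a semi $n$-ideal of $R$, I would apply Theorem \ref{IM}(2) directly: because $R$ is an integral domain, it yields that $N=IM$ is a semi $n$-submodule of $M$.

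I expect no genuine obstacle in the routine manipulations; the only point requiring care is the preliminary reduction to an integral domain, which bridges the mismatch between the corollary's weaker hypothesis on $R$ and the domain hypothesis in Theorem \ref{IM}(2). (If one preferred to avoid appealing to that reduction for the direction $(3)\Rightarrow(2)$, one could instead observe via Lemma \ref{Smith} that $(IM:_{R}M)=I$, so that $(N:_{R}M)=I$ is a semi $n$-ideal, giving $(2)$ directly; but routing everything through the cycle above keeps the proof uniform and minimal.)
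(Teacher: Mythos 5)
Your proof is correct and follows exactly the route the paper intends: the corollary is stated there without a written proof, as an immediate consequence of Corollary \ref{(N:M)2} (giving $(1)\Rightarrow(2)$), the multiplication-module identity $N=(N:_{R}M)M$ (giving $(2)\Rightarrow(3)$), and Theorem \ref{IM}(2) (giving $(3)\Rightarrow(1)$). Your preliminary observation that a nonzero torsion-free module forces $R$ to be an integral domain is precisely the unstated bridge the paper needs in order to invoke Theorem \ref{IM}(2) under the corollary's weaker hypothesis that $R$ is merely a ring, so making it explicit is a useful clarification rather than a deviation.
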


We recall that for a submodule $N$ of an $R$-module $M$, $rad(N)$ denotes the
intersection of all prime submodules of $M$ containing $N$. Moreover, if $M$
is finitely generated faithful multiplication, then $rad(N)=\sqrt{(N:_{R}M)}%
M$, \cite{Smith}. One can conclude by Theorem \ref{IM} that if $R$ is an
integral domain, $M$ is a finitely generated multiplication $R$-module and $N$
is a submodule of $M$ such that $\sqrt{(N:_{R}M)}$ is a semi $n$-ideal of $R$,
then $rad(N)$ is a semi $n$-submodule of $M.$

Let $R$ be an integral domain and $I$ be an ideal of $R$. In the following
lemma, we show that if $N$ is a semi $n$-submodule of an $R$-module $M$ and
$(N:_{M}I)\neq M$, then $(N:_{M}I)$ is also a semi $n$-submodule of $M.$

\begin{lemma}
\label{(N:I)}Let $R$ be an integral domain and $N$ be a semi $n$-submodule of
an $R$-module $M$. Then for any ideal $I$ of $R$ with $(N:_{M}I)\neq M$,
$(N:_{M}I)$ is a semi $n$-submodule of $M.$ In particular, if $a\in R$ with
$(N:_{M}a)\neq M$, then $(N:_{M}a)$ is a semi $n$-submodule of $M.$
\end{lemma}

\begin{proof}
Suppose $N$ is a semi $n$-submodule of $M$. Let $r\in R$ and $K$ be a
submodule of $M$ such that $r^{2}K\subseteq(N:_{M}I)$, $r\notin\sqrt{0}$ and
$T(K)=\{0_{M}\}$. Then $r^{2}IK\subseteq N$ and clearly $T(IK)=\{0_{M}\}$. By
Theorem \ref{char}, we conclude that $rIK\subseteq N$ and so $rK\subseteq
(N:_{M}I)$.\ Therefore, $(N:_{M}I)$ is a semi $n$-submodule of $M$ again by
Theorem \ref{char}. The "in particular" part can be verified by a similar way.
\end{proof}

A submodule $N$ of an $R$-module $M$ is called a maximal semi $n$-submodule if
there is no proper submodule in $M$ which contains $N$ properly.

\begin{proposition}
Let $M$ be an $R$-module where $R$ is an integral domain. Then any maximal
semi $n$-submodule of $M$ is a prime submodule.
\end{proposition}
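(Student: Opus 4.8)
The plan is to verify the defining property of a prime submodule directly. Since a semi $n$-submodule is by definition proper, $N$ is proper, so it remains to show: for $r\in R$ and $m\in M$ with $rm\in N$, either $m\in N$ or $r\in(N:_{R}M)$. I would therefore fix such $r,m$, assume $m\notin N$, and aim to deduce $rM\subseteq N$.

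The device I would use is the residual submodule $(N:_{M}r)=\{x\in M:rx\in N\}$. This is always a submodule of $M$ containing $N$, and the hypothesis $rm\in N$ places $m$ inside it; since $m\notin N$, the containment $N\subseteq(N:_{M}r)$ is strict. The crucial observation is that $(N:_{M}r)=M$ is exactly the statement $rM\subseteq N$, i.e. $r\in(N:_{R}M)$, which is what we want. So it suffices to rule out the possibility that $(N:_{M}r)$ is a proper submodule.

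The key step, and the one that consumes the integral-domain hypothesis, is to suppose for contradiction that $(N:_{M}r)\neq M$. Because $R$ is an integral domain and $N$ is a semi $n$-submodule, the ``in particular'' part of Lemma \ref{(N:I)}, applied to the element $a=r$, shows that $(N:_{M}r)$ is again a semi $n$-submodule of $M$. But then $(N:_{M}r)$ is a semi $n$-submodule that properly contains $N$, contradicting the maximality of $N$ among semi $n$-submodules. Hence $(N:_{M}r)=M$, giving $r\in(N:_{R}M)$, and $N$ is prime.

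I do not expect a genuine obstacle here beyond correctly invoking Lemma \ref{(N:I)}: that lemma is the only substantive input, and it is precisely where the assumption that $R$ is an integral domain is needed. Without it one could not guarantee that the residual $(N:_{M}r)$ stays a semi $n$-submodule, and the whole ``maximal implies prime'' argument would collapse; with it, the remaining reasoning is routine bookkeeping.
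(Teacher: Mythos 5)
Your proof is correct and is essentially the paper's own argument: both proofs hinge on the residual $(N:_{M}r)$, invoke the ``in particular'' part of Lemma \ref{(N:I)} (which is where the integral-domain hypothesis enters), and then apply maximality of $N$. The only difference is cosmetic---the paper fixes $a\notin(N:_{R}M)$ and concludes $m\in(N:_{M}a)=N$, whereas you argue contrapositively from $m\notin N$ to $(N:_{M}r)=M$.
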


\begin{proof}
Suppose $N$ is a maximal semi $n$-submodule of an $R$-module $M$. Let $a\in
R,$ $m\in M$ with $am\in N$ and $a\notin(N:_{R}M).$ Then $(N:_{M}a)$ is
clearly proper in $M$ and so a semi $n$-submodule of $M$ by Lemma \ref{(N:I)}.
Since $N$ is maximal, we have $m\in(N:_{M}a)=N$. Thus, $N$ is a prime
submodule of $M.$
\end{proof}

Next, we discuss when $IN$ is a semi $n$-submodule of a finitely generated
multiplication module $M$ where $I$ is an ideal of $R$ and $N$ is a submodule
of $M$. Recall that a submodule $N$ of an $R$-module $M$ is said to be pure if
$JN=JM\cap N$ for every ideal $J$ of $R$. In the following definition, we give
a generalization of\ this concept.

\begin{definition}
Let $N$ be a submodule of an $R$-module $M$. Then $N$ is said to be weakly
pure if $JN=JM\cap rad(N)$ for every ideal $J$ of $R$.
\end{definition}

\begin{theorem}
\label{IN}Let $I$ be an ideal of an integral domain $R$, $M$ be a finitely
generated faithful multiplication $R$-module and $N$ be a proper submodule of
$M$.
\end{theorem}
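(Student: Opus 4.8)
The plan is to reduce the entire statement to the corresponding question about ideals of $R$ through the module–ideal correspondence available for finitely generated faithful multiplication modules, and then to invoke Corollary \ref{(N:M)}. The first preliminary step is to record that $M$ is automatically torsion-free: if $0_M\neq m\in M$ satisfied $rm=0_M$ for some $0\neq r\in R$, then writing $Rm=(Rm:_RM)M=:JM$ gives $(rJ)M=r(Rm)=0_M$, whence $rJ\subseteq Ann_R(M)=0$ by faithfulness, and then $J=0$ since $R$ is a domain and $r\neq0$; this forces $Rm=JM=0_M$, a contradiction. Consequently $M$ is a finitely generated torsion-free multiplication module, so Corollary \ref{(N:M)} applies and tells us that a submodule $L\subseteq M$ is a semi $n$-submodule if and only if $(L:_RM)$ is a semi $n$-ideal of $R$. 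I would also note at the outset that $IN\subseteq N\subsetneq M$, so properness of $IN$ is automatic and need not be argued separately.

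Next I would translate the hypotheses into ideal language. Setting $P=(N:_RM)$, we have $N=PM$ and $rad(N)=\sqrt{P}\,M$ by Smith's description of $rad$ for such modules. The key algebraic identity to establish is that for ideals $I_1,I_2$ one has $I_1M\cap I_2M=(I_1\cap I_2)M$: the inclusion $\supseteq$ is trivial, and for $\subseteq$ one takes $x\in I_1M\cap I_2M$, writes $Rx=(Rx:_RM)M$, and uses Lemma \ref{Smith} in the form $(I_jM:_RM)=I_j$ to conclude $(Rx:_RM)\subseteq I_1\cap I_2$. Applying weak purity of $N$ with $J=I$ then rewrites $IN=IM\cap rad(N)=(I\cap\sqrt{P})M$, while Lemma \ref{Smith} gives $(IN:_RM)=I(N:_RM)=IP$ directly; comparing the two and cancelling via the equality $AM=BM\Rightarrow A=B$ (again Lemma \ref{Smith}) yields $IP=I\cap\sqrt{P}$. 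Combining this with the equivalence from the first paragraph gives the desired conclusion: $IN$ is a semi $n$-submodule of $M$ if and only if $I\cap\sqrt{(N:_RM)}$ is a semi $n$-ideal of $R$.

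The main obstacle is the intersection formula together with the attendant cancellation step, both of which rest on the finitely generated faithful multiplication hypothesis through Lemma \ref{Smith}; getting these residual computations exactly right — in particular checking that weak purity really delivers the \emph{equality} $IN=(I\cap\sqrt{P})M$ rather than merely one inclusion — is where the care is needed. A secondary point to watch is the interface with Corollary \ref{(N:M)}, whose hypothesis is torsion-free (not faithful) multiplication; this is precisely why the torsion-free reduction of the first paragraph is a genuine prerequisite rather than a cosmetic remark. Should the statement instead be phrased as a chain of equivalences also involving $(N:_RM)$ or $N$ itself being a semi $n$-submodule, the same reduction applies verbatim, with each module-level condition replaced by its residual ideal via Corollary \ref{(N:M)} and Lemma \ref{Smith}.
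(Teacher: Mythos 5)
The statement you were given is only the preamble of Theorem \ref{IN}; in the paper it carries \emph{two} enumerated assertions: (1) if $I$ is a semi $n$-ideal and $N$ is a \emph{weakly pure} semi $n$-submodule, then $IN$ is a semi $n$-submodule; and (2) if $I$ is a finitely generated faithful multiplication ideal and $IN$ is a semi $n$-submodule, then $N$ is a semi $n$-submodule. Your reduction is correct as far as it goes, and it is a genuinely different route from the paper's: the paper never passes to the ideal level via Corollary \ref{(N:M)}, but instead verifies the submodule-wise criterion of Theorem \ref{char} directly, using Lemma \ref{Smith} for (1) and Lemma \ref{Majed} for (2). Your preliminary steps are all sound: torsion-freeness of $M$ (the argument via $Rm=(Rm:_RM)M$ is fine), the intersection formula $I_1M\cap I_2M=(I_1\cap I_2)M$, and, writing $P=(N:_RM)$, the identities $IN=(I\cap\sqrt{P})M$ and $IP=I\cap\sqrt{P}$ coming from weak purity and Lemma \ref{Smith}. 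But even for the direction you treat, you stop one (easy) step short of assertion (1): from your equivalence you still must check that $I\cap\sqrt{P}$ \emph{is} a semi $n$-ideal when $I$ is one. This is true because $\sqrt{P}$ is a proper radical ideal, hence trivially a semi $n$-ideal, and intersections of semi $n$-ideals are semi $n$-ideals (cf.\ Lemma \ref{int} with $M=R$); you never draw this conclusion. Incidentally, your route, like the paper's own proof, exposes that the hypothesis that $N$ be a semi $n$-submodule is never actually used in (1) --- only weak purity matters --- and it yields the sharper ``if and only if'' formulation.

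The genuine gap is assertion (2), which your proposal does not address at all and which cannot be extracted from your equivalence: part (2) assumes no weak purity, whereas your identity $IN=IM\cap rad(N)=(I\cap\sqrt{P})M$ rests essentially on it. Your framework could be pushed to cover (2): by Corollary \ref{(N:M)} and Lemma \ref{Smith} it reduces to the ideal statement that if $IP$ is a semi $n$-ideal then so is $P$. Indeed, take $0\neq a$ with $a^2\in P$; for every $0\neq x\in I$ we have $(ax)^2=x^2a^2\in IP$ and $ax\notin\sqrt{0}$ since $R$ is a domain, so $ax\in IP$; hence $aI\subseteq IP$ and therefore $a\in(IP:_RI)=P$ by Lemma \ref{Majed} applied to the faithful multiplication $R$-module $R$. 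The paper obtains the same conclusion at the module level: from $r^2K\subseteq N$ it passes to $r^2(IK)\subseteq IN$ with $T(IK)=\{0_M\}$, applies Theorem \ref{char} to the semi $n$-submodule $IN$, and cancels using $(IN:_MI)=N$ from Lemma \ref{Majed}. Without some such cancellation argument, the second half of the theorem remains unproven.
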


\begin{enumerate}
\item If $I$ is a semi $n$-ideal of $R$ and $N$ is a weakly pure semi
$n$-submodule of $M$, then $IN$ is a semi $n$-submodule of $M$.

\item If $I$ is a finitely generated faithful multiplication ideal and $IN$ is
a semi $n$-submodule of $M$, then $N$ is a semi $n$-submodule of $M$.
\end{enumerate}

\begin{proof}
(1) We note that $IN$ is proper in $M$ since otherwise by Lemma \ref{Smith},
$R=(IN:_{R}M)=I(N:_{R}M)\subseteq I$, a contradiction. Suppose that
$r^{2}K\subseteq IN$, $r\notin\sqrt{0}$ and $T(K)=\{0_{M}\}$ for $r\in R$ and
a nonzero submodule $K=JM$ of $M$. Take $A=rJ$ and again use Lemma \ref{Smith}
to see that
\[
A^{2}\subseteq(r^{2}JM:_{R}M)\subseteq(IN:_{R}M)=I(N:_{R}M)\subseteq
I\cap(N:_{R}M)
\]
Let $a=rj\in A$ for $j\in J$ so that $a^{2}\in A^{2}\subseteq I$. If
$a\in\sqrt{0}$, then $0=r^{k}j^{k}M\subseteq r^{k}JM=r^{k}K$ for some $k\in%
%TCIMACRO{\U{2115} }%
%BeginExpansion
\mathbb{N}
%EndExpansion
$. Since $K\neq0$ and $T(K)=\{0_{M}\}$, then $r^{k}=0$, a contradiction. Thus,
$a\notin\sqrt{0}$ and so $a\in I$ since $I$ is a semi $n$-ideal of $R$. Also,
we have $A\subseteq\sqrt{(N:_{R}M)}$ and so $A\subseteq I\cap\sqrt{(N:_{R}M)}%
$. Since $rad(N)=\sqrt{(N:_{R}M)}M$ and $N$ is weakly pure, we get
$rK=AM\subseteq IM\cap\sqrt{(N:_{R}M)}M=IM\cap rad(N)=IN$, as needed.

(2) Suppose that $IN$ is a semi $n$-submodule of $M$ where $I$ is finitely
generated faithful multiplication. If $N=M$, then by Lemma \ref{Majed},
$N=(IN:_{M}I)=(IM:_{M}I)=M$, a contradiction. Let $r\in R$ and $K$ be a
submodule of $M$ such that $r^{2}K\subseteq N$, $r\notin\sqrt{0}$ and
$T(K)=\{0_{M}\}$. Then $r^{2}IK\subseteq IN$ where clearly $T(IK)=\{0_{M}\}$.
By assumption, $rIK\subseteq IN$ and hence by Lemma \ref{Majed},
$rK\subseteq(IN:_{M}I)=N,$ as required.
\end{proof}

Next, we discuss the behavior of semi $n$-submodules under homomorphisms and localizations.

\begin{proposition}
\label{fsub}Let $M$ and $M^{\prime}$ be $R$-modules and $f:M\rightarrow
M^{\prime}$ be an $R$-module homomorphism.
\end{proposition}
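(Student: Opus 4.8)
The proposition records how semi $n$-submodules transfer along $f$, and I would prove the two natural statements: first, that if $f$ is a monomorphism and $N'$ is a semi $n$-submodule of $M'$ with $f^{-1}(N')$ proper, then $f^{-1}(N')$ is a semi $n$-submodule of $M$; and second, that if $f$ is an epimorphism and $N$ is a semi $n$-submodule of $M$ containing $\ker f$, then $f(N)$ is a semi $n$-submodule of $M'$. In both directions the condition $r \notin \sqrt{0}$ is a statement about $R$ alone and so passes through untouched; the entire argument therefore reduces to tracking the annihilator of the relevant module element through $f$.

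For the preimage under a monomorphism, take $r \in R$ and $m \in M$ with $r^2 m \in f^{-1}(N')$, $r \notin \sqrt{0}$ and $Ann_{R}(m) = 0$. The key identity is $Ann_{R}(f(m)) = Ann_{R}(m)$, which holds precisely because $f$ is injective: from $r f(m) = f(rm) = 0$ one gets $rm = 0$. Hence $Ann_{R}(f(m)) = 0$, while $r^2 f(m) = f(r^2 m) \in N'$ with $r \notin \sqrt{0}$. Applying the defining property of $N'$ gives $r f(m) \in N'$, that is $f(rm) \in N'$, so $rm \in f^{-1}(N')$, as needed.

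For the image under an epimorphism, I would first check that $f(N)$ is proper, which follows from $f^{-1}(f(N)) = N + \ker f = N \neq M$. Given $r \in R$ and $m' \in M'$ with $r^2 m' \in f(N)$, $r \notin \sqrt{0}$ and $Ann_{R}(m') = 0$, write $m' = f(m)$ by surjectivity. Then $f(r^2 m) = r^2 m' \in f(N)$ yields $r^2 m \in f^{-1}(f(N)) = N$, and the general inclusion $Ann_{R}(m) \subseteq Ann_{R}(f(m)) = 0$ gives $Ann_{R}(m) = 0$ for free. The semi $n$-submodule property of $N$ then delivers $rm \in N$, whence $r m' = f(rm) \in f(N)$. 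The main obstacle is exactly the direction of this annihilator inclusion: the push-forward gets the hypothesis $Ann_{R}(m) = 0$ automatically from $Ann_{R}(m) \subseteq Ann_{R}(f(m))$, while the pull-back must invoke injectivity to promote that containment to an equality; meanwhile the hypothesis $\ker f \subseteq N$ is precisely what lets one pass from $f(r^2 m) \in f(N)$ back to $r^2 m \in N$.
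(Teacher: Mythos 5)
Your proof is correct, and the core mechanism is the same as the paper's: in both directions everything reduces to tracking annihilators through $f$, using the general containment $Ann_{R}(m)\subseteq Ann_{R}(f(m))$ for the push-forward and injectivity of $f$ for the reverse containment, with $\ker f\subseteq N$ doing exactly the job you describe of pulling $r^{2}m'\in f(N)$ back to $r^{2}m\in N$. The one genuine difference is in the preimage statement: the paper assumes $f$ is an isomorphism, while you assume only that $f$ is a monomorphism together with the (necessary) extra hypothesis that $f^{-1}(N')$ is proper. Your version is strictly more general, and in fact the paper's own proof of that part never uses surjectivity except to make properness of $f^{-1}(N')$ automatic; your formulation isolates what the argument really needs, and it applies for instance to the inclusion of a submodule $L\hookrightarrow M'$, showing that $N'\cap L$ is a semi $n$-submodule of $L$ whenever it is proper. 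You also explicitly verify properness of $f(N)$ via $f^{-1}(f(N))=N+\ker f=N\neq M$, a point the paper passes over silently; since properness is part of the definition of a semi $n$-submodule, including that check is the right call.
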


\begin{enumerate}
\item If $f$ is an epimorphism and $N$ is a semi $n$-submodule of $M$
containing $Ker(f)$, then $f(N)$ is a semi $n$-submodule of $M^{\prime}$.

\item If $f$ is an isomorphism and $N^{\prime}$ is a semi $n$-submodule of
$M^{\prime}$, then $f^{-1}(N^{\prime})$ is a semi $n$-submodule of $M$.
\end{enumerate}

\begin{proof}
(1) Let $N$ be a semi $n$-submodule of $M$ and $r\in R$, $m^{\prime}\in
M^{\prime}$ such that $r^{2}m^{\prime}\in f(N)$, $r\notin\sqrt{0}$ and
$Ann_{R}(m^{\prime})=0$. Put $m^{\prime}=f(m)$ for some $m\in M.$ Then
$r^{2}f(m)\in f(N)$ which yields that $r^{2}m\in N$ as $Ker(f)\subseteq N$. If
$r\in Ann_{R}(m)$, then $rm=0_{M}$ which implies $rf(m)=0_{M^{\prime}}$. It
follows that $r\in Ann_{R}(m^{\prime})=0.$ Thus, $Ann_{R}(m)=0$ and so $rm\in
N$ as $N$ is a semi $n$-submodule of $M$. Therefore, $rm^{\prime}\in f(N)$ and
$f(N)$ is a semi $n$-submodule of $M^{\prime}$.

(2) Let $N^{\prime}$ be a semi $n$-submodule of $M^{\prime}$. Suppose that
$r^{2}m\in f^{-1}(N^{\prime})$, $r\notin\sqrt{0}$ and $Ann_{R}(m)=0$ for some
$r\in R$ and $m\in M$. Then $r^{2}f(m)=f(r^{2}m)\in N^{\prime}$. Assume that
$af(m)=0$ for some $a\in R.$ Then $f(am)=0$ implies $am\in Ker(f)=\{0_{M}\}$
and so $a\in Ann_{R}(m)=0$. Thus, $Ann_{R}(f(m))=0$ and since $N^{\prime}$ is
a semi $n$-submodule, we conclude that $rf(m)\in N^{\prime}$. Therefore,
$rm\in f^{-1}(N^{\prime})$ and we are done.
\end{proof}

Consequently, let $L\subseteq N$ be two submodules of an $R$-module $M$. If
$N$ is a semi $n$-submodule of $M$, then $N/L$ is a semi $n$-submodule of
$M/L.$ Indeed, consider the canonical epimorphism $\pi:M\rightarrow M/L$. Then
$Ker$ $\pi=L\subseteq N$ and $\pi(N)=N/L$is a semi $n$-submodule of $N/L$ by
(1) of Proposition \ref{fsub}.

Now, we investigate the relationships between semi $n$-submodules of an
$R$-module $M$ and those of the modules of fractions $S^{-1}M$ where $S$ is a
multiplicatively closed subset of $R$.

\begin{theorem}
\label{SM}Let $S$ be a multiplicatively closed subset of a ring $R$ and $M$ be
an $R$-module such that $S\subseteq reg(R)$.
\end{theorem}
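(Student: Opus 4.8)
The plan is to establish the two implications relating a semi $n$-submodule $N$ of $M$ to the submodule $S^{-1}N$ of the localization $S^{-1}M$, using the hypothesis $S\subseteq reg(R)$ at every point where a nilpotency or annihilator condition must be transferred between $R$ and $S^{-1}R$. Before either direction, I would record two routine facts. First, the nilradical of $S^{-1}R$ equals $S^{-1}\sqrt{0}$, so that a class $\frac{r}{s}$ being non-nilpotent in $S^{-1}R$ forces $r\notin\sqrt{0}$ in $R$ (if $r^{k}=0$ then $(\frac{r}{s})^{k}=0$), and conversely $r\notin\sqrt{0}$ gives $\frac{r}{1}$ non-nilpotent, since $w r^{k}=0$ with $w\in S\subseteq reg(R)$ forces $r^{k}=0$. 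Second, because every element of $S$ is regular, annihilators are preserved: if $Ann_{S^{-1}R}(\frac{m}{t})=0$ then $Ann_{R}(m)=0$, as $am=0$ gives $\frac{a}{1}\in Ann_{S^{-1}R}(\frac{m}{t})$, hence $wa=0$ for some $w\in S$, hence $a=0$; and for any $s'\in S$ one has $Ann_{R}(s'm)=Ann_{R}(m)$.

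For the forward direction I would take $\frac{r}{s}\in S^{-1}R$ and $\frac{m}{t}\in S^{-1}M$ with $(\frac{r}{s})^{2}\frac{m}{t}\in S^{-1}N$, with $\frac{r}{s}$ non-nilpotent and $Ann_{S^{-1}R}(\frac{m}{t})=0$. Clearing denominators, the membership $\frac{r^{2}m}{s^{2}t}\in S^{-1}N$ yields $s'r^{2}m\in N$ for some $s'\in S$; rewriting this as $r^{2}(s'm)\in N$ and invoking the preliminary facts ($r\notin\sqrt{0}$ and $Ann_{R}(s'm)=0$), the semi $n$-submodule property of $N$ gives $r(s'm)=s'(rm)\in N$. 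Dividing by $s'$ returns $\frac{rm}{st}=\frac{s'rm}{s'st}\in S^{-1}N$, as required; properness of $S^{-1}N$ is handled by the standing hypothesis on $N$ (for instance $(N:_{R}M)\cap S=\emptyset$), via Lemma \ref{Smith}-type reasoning.

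For the converse I would assume $S^{-1}N$ is a semi $n$-submodule of $S^{-1}M$, take $r\in R$ and $m\in M$ with $r^{2}m\in N$, $r\notin\sqrt{0}$, $Ann_{R}(m)=0$, and transfer to $S^{-1}M$. The images satisfy $\frac{r}{1}$ non-nilpotent and $Ann_{S^{-1}R}(\frac{m}{1})=0$ by the preliminary facts, while $\frac{r^{2}m}{1}\in S^{-1}N$, so the semi $n$-property of $S^{-1}N$ yields $\frac{rm}{1}\in S^{-1}N$. Contracting this membership returns $s'rm\in N$ for some $s'\in S$, and the additional hypothesis carried by the statement (which makes $N$ saturated with respect to $S$, ruling out an element of $S$ pulling $rm$ out of $N$) upgrades this to $rm\in N$.

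The step I expect to be the main obstacle is precisely this final contraction: the membership $\frac{rm}{1}\in S^{-1}N$ only returns $s'rm\in N$ for some $s'\in S$, and passing from here to $rm\in N$ is not automatic. This is exactly where the theorem's extra hypothesis must be deployed, and where the strength of $S\subseteq reg(R)$ (guaranteeing every such $s'$ is regular, so that the annihilator conditions cannot degenerate) is essential; by contrast the forward direction is comparatively mechanical once the two preliminary facts are in place.
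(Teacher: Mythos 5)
Your proposal is correct in substance and follows essentially the same route as the paper's proof: clear denominators, transfer the non-nilpotency and annihilator hypotheses between $R$ and $S^{-1}R$ using $S\subseteq reg(R)$, apply the semi $n$-property, and in the converse direction contract back to $M$ along a saturation hypothesis. Three detail-level points, none of which change the strategy. (i) Your preliminary fact ``$Ann_{R}(s^{\prime}m)=Ann_{R}(m)$ for any $s^{\prime}\in S$'' is false as a general statement: an element regular in $R$ can still annihilate elements of $M$ (take $R=\mathbb{Z}$, $M=\mathbb{Z}\oplus\mathbb{Z}_{2}$, $s^{\prime}=2$, $m=(0,\bar{1})$). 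It does hold in the only case you invoke it, namely when $Ann_{R}(m)=0$, since $as^{\prime}m=0$ gives $as^{\prime}\in Ann_{R}(m)=0$ and hence $a=0$; the paper sidesteps this entirely by deducing $Ann_{R}(um)=0$ directly from the hypothesis $Ann_{S^{-1}R}(\frac{m}{t})=0_{S^{-1}R}$, which is also available to you. (ii) The properness hypothesis you float for the forward direction, $(N:_{R}M)\cap S=\emptyset$, is strictly weaker than the paper's actual hypothesis $\bigcup_{s\in S}(N:_{M}s)\neq M$ and does not suffice: properness of $S^{-1}N$ in $S^{-1}M$ is equivalent to the existence of some $m\in M$ with $sm\notin N$ for every $s\in S$, and this can fail even when no single $s\in S$ carries all of $M$ into $N$ (each $m$ may have its own $s$). (iii) In the final contraction of the converse, $S\subseteq reg(R)$ plays no role: regularity in $R$ does not prevent an element of $S$ from being a zero-divisor on $M/N$. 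What rescues that step is exactly the paper's separate hypothesis $S\cap Z_{N}(M)=\emptyset$, i.e. the saturation condition you correctly anticipated; attributing its strength to regularity muddles the one place where an extra assumption is genuinely indispensable.
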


\begin{enumerate}
\item If $N$ is a semi $n$-submodule of $M$ providing $%
%TCIMACRO{\dbigcup \limits_{s\in S}}%
%BeginExpansion
{\displaystyle\bigcup\limits_{s\in S}}
%EndExpansion
$ $(N:_{M}s)\neq M$, then $S^{-1}N$ is a semi $n$-submodule of $S^{-1}M.$

\item If $S^{-1}N$ is a semi $n$-submodule of $S^{-1}R$ and $S\cap
Z_{N}(R)=\emptyset$, then $N$ is a semi $n$-submodule of $M.$
\end{enumerate}

\begin{proof}
(1) We note that $S^{-1}N$ is proper in $S^{-1}M$. Indeed, suppose
$S^{-1}N=S^{-1}M$ and let $m\in M$. Then $\frac{m}{1}\in S^{-1}N$ and so
$sm\in N$ for some $s\in S$. Hence, $m\in%
%TCIMACRO{\dbigcup \limits_{s\in S}}%
%BeginExpansion
{\displaystyle\bigcup\limits_{s\in S}}
%EndExpansion
$ $(N:_{M}s)$, a contradiction. For $\frac{r}{s}\in S^{-1}R$ and $\frac{m}%
{t}\in S^{-1}M$,\ let $\left(  \frac{r}{s}\right)  ^{2}\left(  \frac{m}%
{t}\right)  \in S^{-1}N$ where $\frac{r}{s}\notin\sqrt{0_{S^{-1}R}}$ and
$Ann_{S^{-1}R}(\frac{m}{t})=0_{S^{-1}R}$. Choose $u\in S$ such that
$r^{2}(um)\in N$. Clearly, we have $r\notin\sqrt{0}$ and we show that
$Ann_{R}(um)=0$. Assume that $r^{\prime}um=0$ for some $r^{\prime}\in R$. Then
$\frac{r^{\prime}u}{1}\frac{m}{t}=0_{S^{-1}M}$ and since $Ann_{S^{-1}R}%
(\frac{m}{t})=0_{S^{-1}R}$, we conclude that $\frac{r^{\prime}u}{1}%
=0_{S^{-1}R}$. Thus, $r^{\prime}us=0$ for some $s\in S$. It follows that
$r^{\prime}=0$ since $us\in S\subseteq reg(R)$ and so $Ann_{R}(um)=0$. Since
$N$ is a semi $n$-submodule of $M$, $r^{2}(um)\in N,$ $r\notin\sqrt{0}$ and
$Ann_{R}(um)=0,$ we have $rum\in N$ and so $\frac{r}{s}\frac{m}{t}=\frac
{rum}{sut}\in S^{-1}N$. Thus, $S^{-1}N$ is a semi $n$-submodule of $S^{-1}M.$

(2) Suppose that $S^{-1}N$ is a semi $n$-submodule of $S^{-1}R.$ Clearly, $N$
is proper in $M$. Let $r\in R$ and $m\in M$ such that $r^{2}m\in N$,
$r\notin\sqrt{0}$ and $Ann_{R}(m)=0$. Then $\left(  \frac{r}{1}\right)
^{2}\frac{m}{1}\in S^{-1}N$ and $\frac{r}{1}\notin\sqrt{0_{S^{-1}R}}$. Indeed,
if there exists an integer $k$ such that $\left(  \frac{r}{1}\right)
^{k}=\frac{0}{1}$, then $ur^{k}=0$ for some $u\in S$. Thus, $r^{k}=0$ as
$S\subseteq reg(R)$ which is a contradiction. Now, let $\frac{r}{s}\in
Ann_{S^{-1}R}(\frac{m}{1})$ so that $\frac{r}{s}\frac{m}{1}=0_{S^{-1}M}$.
Thus, $rvm=0$ for some $v\in S$ and so $rv=0$ as $Ann_{R}(m)=0$. Since
$S\subseteq reg(R)$, we get $r=0$ and so $\frac{r}{s}=\frac{0}{1}$. Hence,
$Ann_{S^{-1}R}(\frac{m}{1})=0_{S^{-1}R}$ and by assumption, we conclude that
$\frac{r}{1}\frac{m}{1}\in S^{-1}N$. Hence, $wrm\in N$ for some $w\in S$ and
since $S\cap Z_{N}(M)=\emptyset$, we conclude that $rm\in N$, as desired.
\end{proof}

The proof of the following Lemma is straightforward.

\begin{lemma}
\label{int}Let $\{N_{i}\}_{i\in I}$ be a non-empty family of semi
$n$-submodules of an $R$-module $M$. Then $%
%TCIMACRO{\dbigcap \limits_{i\in I}}%
%BeginExpansion
{\displaystyle\bigcap\limits_{i\in I}}
%EndExpansion
N_{i}$ is a semi $n$-submodule of $M$. Additionally, $%
%TCIMACRO{\dbigcup \limits_{i\in I}}%
%BeginExpansion
{\displaystyle\bigcup\limits_{i\in I}}
%EndExpansion
N_{i}$ is a semi $n$-submodule of $M$ provided that $\{N_{i}\}_{i\in I}$ is a
chain in $M$.
\end{lemma}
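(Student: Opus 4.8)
The plan is to verify the defining condition of a semi $n$-submodule directly in each case, exploiting that membership of the single element $r^{2}m$ is what the family members control. Both parts reduce to applying the hypothesis to individual $N_{i}$'s and then reassembling.

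First, for the intersection $N=\bigcap_{i\in I}N_{i}$, I would note that $N$ is a submodule and is proper, since $N\subseteq N_{i_{0}}$ for any fixed $i_{0}\in I$ and $N_{i_{0}}$ is proper. To check the condition, suppose $r\in R$ and $m\in M$ satisfy $r^{2}m\in N$, $r\notin\sqrt{0}$ and $Ann_{R}(m)=0$. Then $r^{2}m\in N_{i}$ for every $i\in I$, and since each $N_{i}$ is a semi $n$-submodule, $rm\in N_{i}$ for every $i$. Hence $rm\in\bigcap_{i\in I}N_{i}=N$, as required. This part is completely routine and uses no chain hypothesis.

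Next, for the union $N=\bigcup_{i\in I}N_{i}$ under the chain hypothesis: since $\{N_{i}\}_{i\in I}$ is totally ordered by inclusion, the union is again a submodule of $M$ by the usual argument, namely that any finite collection of its elements lies in a common $N_{j}$, inside which the $R$-module operations stay. For the semi $n$-condition, suppose $r^{2}m\in N$ with $r\notin\sqrt{0}$ and $Ann_{R}(m)=0$. The chain hypothesis is exactly what lets me conclude that $r^{2}m\in N_{j}$ for a single index $j$; then the semi $n$-property of $N_{j}$ yields $rm\in N_{j}\subseteq N$. Thus the role of the chain condition is precisely to localize the premise to one member of the family, which is not available for an arbitrary union.

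The one point that genuinely deserves care, and which I expect to be the main obstacle, is the properness of $\bigcup_{i\in I}N_{i}$: properness of each $N_{i}$ does not, in general, force an infinite ascending union to remain proper. I would therefore either read the statement as carrying the implicit hypothesis that the union is proper, or impose properness of the union directly as the standing assumption; once that is granted, the verification above closes the argument. Everything else is the straightforward bookkeeping already indicated.
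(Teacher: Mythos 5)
Your verification is correct, and it is precisely the ``straightforward'' argument the paper has in mind --- the paper prints no proof at all for this lemma, so there is no alternative route to compare against. One small misattribution in your union argument: the chain hypothesis is \emph{not} what localizes the premise $r^{2}m\in\bigcup_{i\in I}N_{i}$ to a single index $j$; membership of a single element in a union always does that, chain or no chain. The chain hypothesis is needed only where you in fact first use it, namely to make $\bigcup_{i\in I}N_{i}$ closed under addition and hence a submodule; for an arbitrary family the semi $n$-condition would pass the same test, and what fails is being a submodule at all.

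Your worry about properness of the union is not pedantry --- it is a genuine defect in the lemma as stated, and the paper's own Example \ref{ex5}, part (2), furnishes a counterexample. In the $\mathbb{Z}$-module $M=\left\{ \frac{r}{p^{t}}+\mathbb{Z}:r\in\mathbb{Z},\ t\in\mathbb{N}\cup\{0\}\right\}$ of that example, every element is torsion, so the hypothesis $Ann_{\mathbb{Z}}(m)=0$ is never satisfied and every proper submodule $N_{t_{0}}$ is vacuously a semi $n$-submodule. These submodules form a chain whose union is all of $M$, which, being improper, cannot be a semi $n$-submodule. So the union statement genuinely requires the extra hypothesis $\bigcup_{i\in I}N_{i}\neq M$, exactly as you propose; with that proviso added, your argument is complete.
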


Now, for a ring $R$, we examine the semi $n$-submodules of the finite
Cartesian product of $R$-modules.

\begin{theorem}
\label{cart}Let $M_{1},M_{2},$ $\ldots,M_{k}$ be $R$-modules and consider the
$R$-module $M=M_{1}\times M_{2}\times\cdots\times M_{k}$. Let $N_{1}%
,N_{2},\ldots,N_{k}$ be submodules of $M_{1},M_{2},$ $\ldots,M_{k}$,
respectively. If $N=N_{1}\times N_{2}\times\cdots\times N_{k}$ is a semi
$n$-submodule of $M$, then $N_{i}$ is a semi $n$-submodule of $M_{i}$ whenever
$N_{i}\neq M_{i}$ ($i=1,2,\ldots,k$). The converse also holds if $M_{i}$\ is
torsion-free whenever $N_{i}\neq M_{i}$ ($i=1,2,\ldots,k$).
\end{theorem}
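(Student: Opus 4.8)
The plan is to prove the two implications separately: the forward direction by a coordinate embedding, and the converse by exploiting torsion-freeness to recover the annihilator hypothesis component by component.

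For the forward direction, suppose $N=N_{1}\times\cdots\times N_{k}$ is a semi $n$-submodule of $M$ and fix an index $i$ with $N_{i}\neq M_{i}$, so that $N_{i}$ is proper in $M_{i}$. To verify the defining property, I would take $r\in R$ and $m_{i}\in M_{i}$ with $r^{2}m_{i}\in N_{i}$, $r\notin\sqrt{0}$ and $Ann_{R}(m_{i})=0$, and form the test element $\tilde{m}=(0,\ldots,0,m_{i},0,\ldots,0)\in M$ carrying $m_{i}$ in the $i$-th coordinate. The key observations are that $Ann_{R}(\tilde{m})=Ann_{R}(m_{i})=0$ and that $r^{2}\tilde{m}\in N$, because its $i$-th coordinate $r^{2}m_{i}$ lies in $N_{i}$ while every other coordinate is $0\in N_{j}$. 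Since $r\notin\sqrt{0}$ and $N$ is a semi $n$-submodule, we obtain $r\tilde{m}\in N$, whose $i$-th coordinate is $rm_{i}\in N_{i}$. Hence $N_{i}$ is a semi $n$-submodule of $M_{i}$. Note that this argument uses no torsion hypothesis, consistent with the asymmetry in the statement.

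For the converse, assume that for every index with $N_{i}\neq M_{i}$ the submodule $N_{i}$ is a semi $n$-submodule and $M_{i}$ is torsion-free; at least one $N_{i}$ is proper (otherwise $N=M$ and there is nothing to prove), so $N$ is proper in $M$. I would take $r\in R$ and $m=(m_{1},\ldots,m_{k})\in M$ with $r^{2}m\in N$, $r\notin\sqrt{0}$ and $Ann_{R}(m)=0$, and check $rm_{i}\in N_{i}$ coordinatewise. If $N_{i}=M_{i}$ this is automatic. If $N_{i}\neq M_{i}$, then $r^{2}m_{i}\in N_{i}$; when $m_{i}=0$ we have $rm_{i}=0\in N_{i}$, and when $m_{i}\neq 0$ the torsion-freeness of $M_{i}$ forces $Ann_{R}(m_{i})=0$, whence the semi $n$-submodule property of $N_{i}$ applied to $r^{2}m_{i}\in N_{i}$ with $r\notin\sqrt{0}$ yields $rm_{i}\in N_{i}$. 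Therefore $rm\in N$, and $N$ is a semi $n$-submodule of $M$.

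The forward direction is essentially formal once the right test element $\tilde{m}$ is chosen, since its annihilator is exactly $Ann_{R}(m_{i})$ and its square sits in $N$ by the submodule structure of the $N_{j}$. The real obstacle lies in the converse: the global hypothesis $Ann_{R}(m)=0$ only asserts that $\bigcap_{j}Ann_{R}(m_{j})=0$ and does not by itself force $Ann_{R}(m_{i})=0$ for the individual components required to invoke the semi $n$-submodule property of $N_{i}$. This is precisely the gap that torsion-freeness of $M_{i}$ closes, by guaranteeing $Ann_{R}(m_{i})=0$ for every nonzero $m_{i}$, which is why the hypothesis is imposed exactly on those coordinates where $N_{i}$ is proper.
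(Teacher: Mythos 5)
Your proof is correct and follows essentially the same route as the paper's: the forward direction uses the same test element $(0,\ldots,m_{i},\ldots,0)$ whose annihilator equals $Ann_{R}(m_{i})$, and the converse is the same coordinatewise check in which torsion-freeness of $M_{i}$ supplies the missing hypothesis $Ann_{R}(m_{i})=0$. If anything, you are slightly more careful than the paper, which glosses over the case $m_{i}=0$ and the properness of $N$.
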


\begin{proof}
Suppose $N$ is a semi $n$-submodule of $M$ and $N_{i}\neq M_{i}$ for some
$i=1,2,\ldots,k$. Let $r\in R$, $m_{i}\in M_{i}$ with $r^{2}m_{i}\in N_{i}$,
$r\notin\sqrt{0}$ and $Ann_{R}(m_{i})=0.$ Then $r^{2}(0,\ldots,m_{i}%
,\ldots,0)\in N$ and $Ann_{R}((0,\ldots,m_{i},\ldots,0))=0$. Since $N$ is a
semi $n$-submodule of $M$, then $r(0,\ldots,m_{i},\ldots,0)\in N$ and so
$rm_{i}\in N_{i}$. Thus, $N_{i}$ is a semi $n$-submodule of $M_{i}.$

Conversely, suppose $M_{i}$\ is torsion-free whenever $N_{i}\neq M_{i}$
($i=1,2,\ldots,k$). Let $r^{2}(m_{1},m_{2},\ldots,m_{k})\in N$, $r\notin
\sqrt{0}$ and $Ann_{R}((m_{1},m_{2},\ldots,m_{k}))=0$. If $N_{i}\neq M_{i}$
($i=1,2,\ldots,k)$, then $r^{2}m_{i}\in N_{i}$, $r\notin\sqrt{0}$ and
$T(M_{i})=0$. By assumption, $rm_{i}\in N_{i}$ and so $r(m_{1},m_{2}%
,\ldots,m_{k})\in N$.
\end{proof}

\begin{corollary}
\label{cc}Let $M_{1}$ and $M_{2}$ be $R$-modules and consider the $R$-module
$M_{1}\times M_{2}$. Let $N_{1}$ and $N_{2}$ be proper submodules of $M_{1}$
and $M_{2}$, respectively. If $N_{1}\times N_{2}$ is a semi $n$-submodule of
$M_{1}\times M_{2}$, then $N_{1}$ is a semi $n$-submodule of $M_{1}$ and
$N_{2}$ is a semi $n$-submodule of $M_{2}$. The converse also holds if $M_{1}$
and $M_{2}$ are torsion-free.
\end{corollary}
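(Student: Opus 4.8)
The plan is to recognize that this corollary is nothing more than the case $k=2$ of the already-established Theorem \ref{cart}, so the entire argument consists of specializing that theorem and checking that the hypotheses match up correctly. I would begin by setting $M = M_1 \times M_2$ and $N = N_1 \times N_2$, and then invoke Theorem \ref{cart} directly.

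For the forward implication, suppose $N_1 \times N_2$ is a semi $n$-submodule of $M_1 \times M_2$. The first part of Theorem \ref{cart} asserts that $N_i$ is a semi $n$-submodule of $M_i$ \emph{whenever} $N_i \neq M_i$. The key observation is that the corollary's standing assumption that $N_1$ and $N_2$ are \emph{proper} submodules means precisely that $N_1 \neq M_1$ and $N_2 \neq M_2$, so the side condition in Theorem \ref{cart} is automatically satisfied for both indices. Hence the theorem yields that $N_1$ and $N_2$ are both semi $n$-submodules with no further work.

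For the converse, I would note that the converse part of Theorem \ref{cart} requires $M_i$ to be torsion-free whenever $N_i \neq M_i$. Since $N_1$ and $N_2$ are proper, this torsion-free requirement is demanded of both $M_1$ and $M_2$, which is exactly the hypothesis imposed in the statement of the corollary. Thus, assuming $M_1$ and $M_2$ are torsion-free and that $N_1, N_2$ are semi $n$-submodules, the converse direction of Theorem \ref{cart} gives that $N_1 \times N_2$ is a semi $n$-submodule of $M_1 \times M_2$.

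There is no genuine obstacle here, since all the substantive content was already handled in the proof of Theorem \ref{cart}; the only thing to verify is the bookkeeping that the properness of $N_1, N_2$ forces the conditions ``$N_i \neq M_i$'' in both the forward and converse statements, thereby turning the conditional hypotheses of the theorem into unconditional ones in the corollary. Accordingly, I expect the proof to amount to a single sentence citing Theorem \ref{cart} with $k = 2$.
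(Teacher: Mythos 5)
Your proposal is correct and matches the paper's intent exactly: the paper states Corollary \ref{cc} without a separate proof, as an immediate specialization of Theorem \ref{cart} to $k=2$, and your bookkeeping (properness of $N_{1},N_{2}$ makes the conditions ``$N_{i}\neq M_{i}$'' hold for both indices, so both conclusions and both torsion-free hypotheses apply) is precisely the verification needed.
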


Let $M$ be an $R$-module. We recall from \cite{Ande} that the idealization of
$M$ by~$R$ is the commutative ring $R\times M$ with coordinate-wise addition
and multiplication defined as $(r_{1},m_{1})(r_{2},m_{2})=(r_{1}r_{2}%
,r_{1}m_{2}+r_{2}m_{1})$, denoted by $R(+)M$. For an ideal $I$ of $R$ and a
submodule $N$ of $M$, $I(+)N$ is an ideal of $R(+)M$ if and only if
$IM\subseteq N$. Also, $\sqrt{0_{R(+)M}}=\sqrt{0}(+)M$. It is proved in
\cite{HE3} that for a proper ideal $I$ of a ring $R$, we have $I$ is a semi
$n$-ideal of $R$ if and only if $I(+)M$ is a semi $n$-ideal of $R(+)M$. For an
ideal $I$ of a ring $R$ and a submodule $N$ of $M$, we justify in the
following when is the ideal $I(+)N$ a semi $n$-ideal of $R(+)M$.

\begin{theorem}
\label{Ide}Let $I$ be a proper ideal of a ring $R$ and $N$ be a submodule of
an $R$-module $M$ such that $IM\subseteq N$. If $I(+)N$ is a semi $n$-ideal of
$R(+)M$, then $I$ is a semi $n$-ideal of $R$ and $N$ is an $n$-submodule of
$M$. Moreover, the converse is true if $\sqrt{Ann_{R}(M)}=\sqrt{0}$.
\end{theorem}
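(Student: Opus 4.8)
The plan is to prove the two implications separately, translating the hypotheses on $R(+)M$ into coordinate conditions via the idealization identity $(a,x)^{2}=(a^{2},2ax)$ together with $\sqrt{0_{R(+)M}}=\sqrt{0}(+)M$. In particular $(a,x)\notin\sqrt{0_{R(+)M}}$ is equivalent to $a\notin\sqrt{0}$, and since $\sqrt{0}\subseteq\sqrt{Ann_{R}(M)}$, every $r\notin\sqrt{Ann_{R}(M)}$ automatically satisfies $r\notin\sqrt{0}$; these dictionary facts are what let a one-variable condition on a coordinate be read off from the ring-level hypothesis.

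For the forward direction, I would first show that $I$ is a semi $n$-ideal of $R$. Given $a\in R$ with $a^{2}\in I$ and $a\notin\sqrt{0}$, consider the element $(a,0)\in R(+)M$. Then $(a,0)^{2}=(a^{2},0)\in I(+)N$, while $(a,0)\notin\sqrt{0_{R(+)M}}$ because $a\notin\sqrt{0}$; applying the semi $n$-ideal hypothesis on $I(+)N$ gives $(a,0)\in I(+)N$, hence $a\in I$. Since $I$ is proper, this shows $I$ is a semi $n$-ideal.

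Next, to prove that $N$ is an $n$-submodule, I would take $r\in R$ and $m\in M$ with $rm\in N$ and $r\notin\sqrt{Ann_{R}(M)}$, and aim to deduce $m\in N$. The natural candidate is the element $(r,m)\in R(+)M$: one has $r\notin\sqrt{0}$, so $(r,m)\notin\sqrt{0_{R(+)M}}$, and the second coordinate of $(r,m)^{2}=(r^{2},2rm)$ already lies in $N$ because $rm\in N$. If one can place $(r,m)^{2}$ inside $I(+)N$, the semi $n$-ideal property yields $(r,m)\in I(+)N$ and therefore $m\in N$. I expect this to be the main obstacle. The semi $n$-ideal condition is quadratic, whereas the $n$-submodule conclusion is linear in $r$, and membership $(r,m)^{2}\in I(+)N$ requires control of the first coordinate $r^{2}$ relative to $I$, which the hypotheses $IM\subseteq N$ and $rm\in N$ do not directly supply. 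Bridging this gap---passing from the single relation $rm\in N$ to $m\in N$ through the squaring of the idealization---is the delicate step, and it is precisely here that I would look for a sharper test element or an additional structural input.

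For the converse, assume in addition $\sqrt{Ann_{R}(M)}=\sqrt{0}$, that $I$ is a semi $n$-ideal of $R$ and $N$ an $n$-submodule of $M$; I would verify directly that $I(+)N$ is a semi $n$-ideal. It is proper since $I$ is proper. Take $(a,x)\in R(+)M$ with $(a,x)^{2}=(a^{2},2ax)\in I(+)N$ and $(a,x)\notin\sqrt{0_{R(+)M}}$, that is $a^{2}\in I$, $2ax\in N$ and $a\notin\sqrt{0}$. Applying the semi $n$-ideal property of $I$ gives $a\in I$, and then $IM\subseteq N$ forces $aM\subseteq N$, so in particular $ax\in N$. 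Since $a\notin\sqrt{0}=\sqrt{Ann_{R}(M)}$, the $n$-submodule property of $N$ applied to $ax\in N$ yields $x\in N$. Hence $a\in I$ and $x\in N$, i.e. $(a,x)\in I(+)N$, which completes the converse. The role of the hypothesis $\sqrt{Ann_{R}(M)}=\sqrt{0}$ is exactly to convert the available condition $a\notin\sqrt{0}$ into the condition $a\notin\sqrt{Ann_{R}(M)}$ needed to invoke the $n$-submodule definition, and the step $aM\subseteq N$ is what avoids any dependence on the harmless factor $2$ appearing in the square.
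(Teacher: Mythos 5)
Two of your three arguments are correct and coincide with the paper's own proof: the test element $(a,0_{M})$ for showing $I$ is a semi $n$-ideal, and the converse, where---exactly as in the paper---you recover $ax\in N$ from $a\in I$ and $IM\subseteq N$ (thereby sidestepping the factor $2$ in $(a,x)^{2}=(a^{2},2ax)$) and then apply the $n$-submodule property of $N$ using $\sqrt{Ann_{R}(M)}=\sqrt{0}$.

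The claim you left open, that $N$ must be an $n$-submodule, is a genuine gap in your proposal, but your diagnosis of the obstruction is exactly right, and the gap cannot be closed. The paper's own proof of this step takes $rm\in N$ with $r\notin\sqrt{Ann_{R}(M)}$, forms the product $(r,0_{M})(0,m)=(0,rm)\in I(+)N$, notes $(r,0_{M})\notin\sqrt{0_{R(+)M}}$, and concludes $(0,m)\in I(+)N$. That inference is the defining property of an $n$-\emph{ideal} (factoring a product of two distinct elements), not of a semi $n$-ideal (which only sees squares), so the paper is invalid precisely at the step you flagged. Moreover, the implication itself is false. Take $R=M=\mathbb{Z}$, $I=0$, $N=4\mathbb{Z}$, so that $IM\subseteq N$. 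If $(a,x)^{2}=(a^{2},2ax)\in 0(+)4\mathbb{Z}$, then $a^{2}=0$, hence $a=0$ and $(a,x)\in\sqrt{0}(+)\mathbb{Z}=\sqrt{0_{R(+)M}}$; thus the defining implication for a semi $n$-ideal holds vacuously and $0(+)4\mathbb{Z}$ is a semi $n$-ideal of $\mathbb{Z}(+)\mathbb{Z}$. Yet $4\mathbb{Z}$ is not an $n$-submodule of the $\mathbb{Z}$-module $\mathbb{Z}$: we have $2\cdot 2\in 4\mathbb{Z}$ and $2\notin\sqrt{Ann_{\mathbb{Z}}(\mathbb{Z})}=\{0\}$, but $2\notin 4\mathbb{Z}$. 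So no ``sharper test element'' exists; the forward statement only becomes true under a stronger hypothesis, for instance that $I(+)N$ is an $n$-ideal of $R(+)M$, in which case the paper's product argument is legitimate.
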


\begin{proof}
Assume that $I(+)N$ is a semi $n$-ideal of $R(+)M$. Let $r\in R$ such that
$r^{2}\in I$ but $r\notin\sqrt{0}$. Then $(r,0_{M})^{2}\in I(+)N$ and
$(r,0_{M})\notin\sqrt{0}(+)M=\sqrt{0_{R(+)M}}$. Thus, $(r,0_{M})\in I(+)N$ and
so $r\in I$, as needed. Now, let $r\in R$ and $m\in N$ such that $rm\in N$ and
$r\notin\sqrt{Ann_{R}(M)}$. Then $(r,0_{M})(0,m)\in I(+)N$ with clearly
$(r,0_{M})\notin\sqrt{0_{R(+)M}}$. It follows that $(0,m)\in I(+)N$ and so
$m\in N$. Therefore, $I$ is a semi $n$-ideal of $R$ and $N$ is an
$n$-submodule of $M$. Conversely, suppose $\sqrt{Ann_{R}(M)}=\sqrt{0}$. Let
$(r,m)\in R(+)M$ such that $(r,m)^{2}\in I(+)N$ and $(r,m)\notin
\sqrt{0_{R(+)M}}=\sqrt{0}(+)M$. Then $r^{2}\in I$ with $r\notin\sqrt{0}$
implies $r\in I$. Also, we have $rm\in N$ as $IM\subseteq N$ and since
$\sqrt{Ann_{R}(M)}=\sqrt{0}$, $r\notin\sqrt{Ann_{R}(M)}$. By assumption, $m\in
N$ and so $(r,m)\in I(+)N$.
\end{proof}

\begin{remark}
In general, if $\sqrt{Ann_{R}(M)}\neq\sqrt{0}$, then the converse of
Proposition \ref{Ide} need not be true. For example, consider the idealization
ring $R=%
%TCIMACRO{\U{2124} }%
%BeginExpansion
\mathbb{Z}
%EndExpansion
(+)%
%TCIMACRO{\U{2124} }%
%BeginExpansion
\mathbb{Z}
%EndExpansion
_{4}$ and the ideal $2%
%TCIMACRO{\U{2124} }%
%BeginExpansion
\mathbb{Z}
%EndExpansion
(+)\left\langle \bar{2}\right\rangle $ of $R$. Then $2%
%TCIMACRO{\U{2124} }%
%BeginExpansion
\mathbb{Z}
%EndExpansion
$ is a semi $n$-ideal of $%
%TCIMACRO{\U{2124} }%
%BeginExpansion
\mathbb{Z}
%EndExpansion
$ by \cite[Example 2.1]{HE3} and $\left\langle \bar{2}\right\rangle $ is an
$n$-submodule of $%
%TCIMACRO{\U{2124} }%
%BeginExpansion
\mathbb{Z}
%EndExpansion
_{4}$. Indeed, if $rm\in\left\langle \bar{2}\right\rangle $ where
$r\notin\sqrt{Ann_{%
%TCIMACRO{\U{2124} }%
%BeginExpansion
\mathbb{Z}
%EndExpansion
}(%
%TCIMACRO{\U{2124} }%
%BeginExpansion
\mathbb{Z}
%EndExpansion
_{4})}=2%
%TCIMACRO{\U{2124} }%
%BeginExpansion
\mathbb{Z}
%EndExpansion
$, then clearly $m\in\left\langle \bar{2}\right\rangle $ as needed. On the
other hand, $2%
%TCIMACRO{\U{2124} }%
%BeginExpansion
\mathbb{Z}
%EndExpansion
(+)\left\langle \bar{2}\right\rangle $ is not a semi $n$-ideal of $R$ since
for example $(2,\bar{1})^{2}=(4,\bar{0})\in2%
%TCIMACRO{\U{2124} }%
%BeginExpansion
\mathbb{Z}
%EndExpansion
(+)\left\langle \bar{2}\right\rangle $ but $(2,\bar{1})\notin\sqrt{0_{R}}=0(+)%
%TCIMACRO{\U{2124} }%
%BeginExpansion
\mathbb{Z}
%EndExpansion
_{4}$ and $(2,\bar{1})\notin2%
%TCIMACRO{\U{2124} }%
%BeginExpansion
\mathbb{Z}
%EndExpansion
(+)\left\langle \bar{2}\right\rangle $. Note that $2%
%TCIMACRO{\U{2124} }%
%BeginExpansion
\mathbb{Z}
%EndExpansion
=\sqrt{Ann_{%
%TCIMACRO{\U{2124} }%
%BeginExpansion
\mathbb{Z}
%EndExpansion
}(%
%TCIMACRO{\U{2124} }%
%BeginExpansion
\mathbb{Z}
%EndExpansion
_{4})}\neq\sqrt{0}=0$.
\end{remark}

\section{Semi $n$-submodules of amalgamated modules}

Let $R$ be a ring, $J$ an ideal of $R$ and $M$ an $R$-module. Recently, in
\cite{Bouba}, the duplication of the $R$-module $M$ along the ideal $J$
(denoted by $M\Join J$) is defined as%
\[
M\Join J=\left\{  (m,m^{\prime})\in M\times M:m-m^{\prime}\in JM\right\}
\]
which is an $(R\Join J)$-module with scalar multiplication defined by
$(r,r+j)\cdot(m,m^{\prime})=(rm,(r+j)m^{\prime})$ for $r\in R$, $j\in J$ and
$(m,m^{\prime})\in M\Join J$. For various properties and results concerning
this kind of modules, one may refer to \cite{Bouba}.

Let $J$ be an ideal of a ring $R$ and $N$ be a submodule of an $R$-module $M$. Then%

\[
N\Join J=\left\{  (n,m)\in N\times M:n-m\in JM\right\}
\]
and
\[
\bar{N}=\left\{  (m,n)\in M\times N:m-n\in JM\right\}
\]
are clearly submodules of $M\Join J$. Moreover,%

\[
Ann_{R\Join J}(M\Join J)=(r,r+j)\in R\Join I\text{ }|\text{ }r\in
Ann_{R}(M)\text{ and }j\in Ann_{R}(M)\cap J\}
\]
and so $M\Join J$ is a faithful $R\Join J$ -module if and only if $M$ is a
faithful $R$-module, \cite[Lemma 3.6]{Bouba}.

In general, let $f:R_{1}\rightarrow R_{2}$ be a ring homomorphism, $J$ be an
ideal of $R_{2}$, $M_{1}$ be an $R_{1}$-module, $M_{2}$ be an $R_{2}$-module
(which is an $R_{1}$-module induced naturally by $f$) and $\varphi
:M_{1}\rightarrow M_{2}$ be an $R_{1}$-module homomorphism. The subring
\[
R_{1}\Join^{f}J=\left\{  (r,f(r)+j):r\in R_{1}\text{, }j\in J\right\}
\]
of $R_{1}\times R_{2}$ is called the amalgamation of $R_{1}$ and $R_{2}$ along
$J$ with respect to $f$. In \cite{Rachida}, the amalgamation of $M_{1}$ and
$M_{2}$ along $J$ with respect to $\varphi$ is defined as%

\[
M_{1}\Join^{\varphi}JM_{2}=\left\{  (m_{1},\varphi(m_{1})+m_{2}):m_{1}\in
M_{1}\text{ and }m_{2}\in JM_{2}\right\}
\]
which is an $(R_{1}\Join^{f}J)$-module with the scalar product defined as
\[
(r,f(r)+j)(m_{1},\varphi(m_{1})+m_{2})=(rm_{1},\varphi(rm_{1})+f(r)m_{2}%
+j\varphi(m_{1})+jm_{2})
\]
For submodules $N_{1}$ and $N_{2}$ of $M_{1}$ and $M_{2}$, respectively, one
can easily justify that the sets
\[
N_{1}\Join^{\varphi}JM_{2}=\left\{  (m_{1},\varphi(m_{1})+m_{2})\in M_{1}%
\Join^{\varphi}JM_{2}:m_{1}\in N_{1}\right\}
\]
and
\[
\overline{N_{2}}^{\varphi}=\left\{  (m_{1},\varphi(m_{1})+m_{2})\in M_{1}%
\Join^{\varphi}JM_{2}:\text{ }\varphi(m_{1})+m_{2}\in N_{2}\right\}
\]
are submodules of $M_{1}\Join^{\varphi}JM_{2}$.

Note that if $R=R_{1}=R_{2}$, $M=M_{1}=M_{2}$, $f=Id_{R}$ and $\varphi=Id_{M}%
$, then the amalgamation of $M_{1}$ and $M_{2}$ along $J$ with respect to
$\varphi$ is exactly the duplication of the $R$-module $M$ along the ideal
$J$. Moreover, in this case, we have $N_{1}\Join^{\varphi}JM_{2}=N\Join J$ and
$\overline{N_{2}}^{\varphi}=\bar{N}$.

The proof of the following lemma is straightforward.

\begin{lemma}
\label{Lemma}Consider the ring $R_{1}\Join^{f}J$ as above. Then $\sqrt
{0_{R\Join^{f}J}}=\sqrt{0_{R_{1}}}\Join^{f}J$ if and only if $J\subseteq
\sqrt{0_{R_{2}}}$.
\end{lemma}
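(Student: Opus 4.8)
The plan is to reduce everything to the observation that nilpotency in the subring $R_{1}\Join^{f}J$ coincides with nilpotency in the ambient product $R_{1}\times R_{2}$, which is coordinate-wise. First I would record that since $R_{1}\Join^{f}J$ is a subring of $R_{1}\times R_{2}$, an element $(r,f(r)+j)$ lies in $\sqrt{0_{R_{1}\Join^{f}J}}$ if and only if $(r,f(r)+j)^{k}=(r^{k},(f(r)+j)^{k})=(0,0)$ for some $k$, that is, if and only if $r\in\sqrt{0_{R_{1}}}$ and $f(r)+j\in\sqrt{0_{R_{2}}}$. I would also note that the representation of an element of $R_{1}\Join^{f}J$ as $(r,f(r)+j)$ with $r\in R_{1}$ and $j\in J$ is unique, so that $\sqrt{0_{R_{1}}}\Join^{f}J=\{(r,f(r)+j):r\in\sqrt{0_{R_{1}}},\ j\in J\}$ is unambiguous.

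With this, one inclusion is automatic and holds with no hypothesis on $J$. Indeed, if $(r,f(r)+j)\in\sqrt{0_{R_{1}\Join^{f}J}}$, then in particular $r\in\sqrt{0_{R_{1}}}$, so $(r,f(r)+j)\in\sqrt{0_{R_{1}}}\Join^{f}J$; hence $\sqrt{0_{R_{1}\Join^{f}J}}\subseteq\sqrt{0_{R_{1}}}\Join^{f}J$ always. The whole content of the lemma therefore lies in the reverse inclusion. For the ``if'' direction I would assume $J\subseteq\sqrt{0_{R_{2}}}$ and take $(r,f(r)+j)\in\sqrt{0_{R_{1}}}\Join^{f}J$ with $r\in\sqrt{0_{R_{1}}}$ and $j\in J$. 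Since $f$ is a ring homomorphism it carries nilpotents to nilpotents, so $f(r)\in\sqrt{0_{R_{2}}}$; combined with $j\in J\subseteq\sqrt{0_{R_{2}}}$ and the fact that $\sqrt{0_{R_{2}}}$ is closed under addition, this gives $f(r)+j\in\sqrt{0_{R_{2}}}$. By the coordinate-wise criterion above, $(r,f(r)+j)\in\sqrt{0_{R_{1}\Join^{f}J}}$, which yields the reverse inclusion and hence equality.

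For the ``only if'' direction I would assume the equality $\sqrt{0_{R_{1}\Join^{f}J}}=\sqrt{0_{R_{1}}}\Join^{f}J$ and test it on the elements coming from $J$ alone. Given $j\in J$, the element $(0,j)=(0,f(0)+j)$ lies in $\sqrt{0_{R_{1}}}\Join^{f}J$ because $0\in\sqrt{0_{R_{1}}}$; by the assumed equality it then lies in $\sqrt{0_{R_{1}\Join^{f}J}}$, and the coordinate-wise criterion forces $j=f(0)+j\in\sqrt{0_{R_{2}}}$. As $j\in J$ was arbitrary, $J\subseteq\sqrt{0_{R_{2}}}$. Since every step is a direct computation, I do not anticipate a genuine obstacle; the only point requiring care is fixing the meaning of the symbol $\sqrt{0_{R_{1}}}\Join^{f}J$ and using the uniqueness of the representation, after which the two inclusions fall out symmetrically.
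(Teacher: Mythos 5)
Your proof is correct, and it is precisely the ``straightforward'' argument the paper has in mind (the paper omits the proof entirely, stating only that it is straightforward): reduce nilpotency in the subring $R_{1}\Join^{f}J$ to coordinate-wise nilpotency in $R_{1}\times R_{2}$, note that the inclusion $\sqrt{0_{R_{1}\Join^{f}J}}\subseteq\sqrt{0_{R_{1}}}\Join^{f}J$ is unconditional, and observe that the reverse inclusion amounts exactly to $f(r)+j$ being nilpotent for all nilpotent $r$ and all $j\in J$, which is equivalent to $J\subseteq\sqrt{0_{R_{2}}}$ (testing on the elements $(0,j)$ for necessity). Your extra care about the uniqueness of the representation $(r,f(r)+j)$, which makes the set $\sqrt{0_{R_{1}}}\Join^{f}J$ well defined, is a worthwhile point that the paper leaves implicit.
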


In the following theorems, we justify conditions under which $N_{1}%
\Join^{\varphi}JM_{2}$ and $\overline{N_{2}}^{\varphi}$ are $n$-submodules
(semi $n$-submodule) in $M_{1}\Join^{\varphi}JM_{2}$. Note that clearly
$N_{1}$ is proper in $M_{1}$ if and only if $N_{1}\Join^{\varphi}JM_{2}$ is
proper in $M_{1}\Join^{\varphi}JM_{2}$.

\begin{theorem}
\label{Amalg}Consider the $(R_{1}\Join^{f}J)$-module $M_{1}\Join^{\varphi
}JM_{2}$ defined as above and let $N_{1}$ be a proper submodule of $M_{1}$. If
$N_{1}\Join^{\varphi}JM_{2}$ is an $n$-submodule of $M_{1}\Join^{\varphi
}JM_{2}$, then $N_{1}$ is an $n$-submodule of $M_{1}$. Moreover, the converse
is true if $JM_{2}=\left\{  0_{M_{2}}\right\}  $.
\end{theorem}

\begin{proof}
Let $r_{1}\in R_{1}$ and $m_{1}\in M_{1}$ such that $r_{1}m_{1}\in N_{1}$ and
$r_{1}\notin\sqrt{Ann_{R_{1}}(M_{1})}$. Then $(r_{1},f(r_{1}))\in R_{1}%
\Join^{f}J$ , $(m_{1},\varphi(m_{1}))\in M_{1}\Join^{\varphi}JM_{2}$ and
$(r_{1},f(r_{1}))(m_{1},\varphi(m_{1}))=(r_{1}m_{1},\varphi(r_{1}m_{1}))\in
N_{1}\Join^{\varphi}JM_{2}$. Moreover, $(r_{1},f(r_{1}))\notin\sqrt
{Ann_{R_{1}\Join^{f}J}(M_{1}\Join^{\varphi}JM_{2})}$. Indeed, suppose that
there is a positive integer $k$ such that $(r_{1},f(r_{1}))^{k}(M_{1}%
\Join^{\varphi}JM_{2})=(0_{M_{1}},0_{M_{2}})$. Then $r_{1}^{k}M_{1}=0$ and so
$r_{1}\in\sqrt{Ann_{R_{1}}(M_{1})}$, a contradiction. Since $N_{1}%
\Join^{\varphi}JM_{2}$ is an $n$-submodule of $M_{1}\Join^{\varphi}JM_{2}$,
then $(m_{1},\varphi(m_{1}))\in N_{1}\Join^{\varphi}JM_{2}$ and so $m_{1}\in
N_{1},$ as needed. Conversely suppose $JM_{2}=\left\{  0_{M_{2}}\right\}  $
and $N_{1}$ is an $n$-submodule of $M_{1}$. Let $(r_{1},f(r_{1})+j)\in
R_{1}\Join^{f}J$, $(m_{1},\varphi(m_{1}))\in M_{1}\Join^{\varphi}JM_{2}$ such
that $(r_{1},f(r_{1})+j)(m_{1},\varphi(m_{1}))\in N_{1}\Join^{\varphi}JM_{2}$
and $(r_{1},f(r_{1})+j)\notin\sqrt{Ann_{R_{1}\Join^{f}J}(M_{1}\Join^{\varphi
}JM_{2})}$. Then $r_{1}m_{1}\in N_{1}$ and we prove that $r_{1}\notin
\sqrt{Ann_{R_{1}}(M_{1})}$. Suppose $r_{1}^{k}M_{1}=0_{M_{1}}$ for some
positive integer $k$. Then for any $(m_{1},\varphi(m_{1}))\in M_{1}%
\Join^{\varphi}JM_{2}$, we have
\begin{align*}
(r_{1},f(r_{1})+j)^{k}(m_{1},\varphi(m_{1}))  &  =(r_{1}^{k},f(r_{1}%
^{k})+j^{\prime})(m_{1},\varphi(m_{1}))\\
&  =(0_{M_{1}},j^{\prime}\varphi(m_{1}))=(0_{M_{1}},0_{M_{2}})
\end{align*}
for some $j^{\prime}\in J$ as $JM_{2}=\left\{  0_{M_{2}}\right\}  $. Thus,
$(r_{1},f(r_{1})+j)\notin\sqrt{Ann_{R_{1}\Join^{f}J}(M_{1}\Join^{\varphi
}JM_{2})}$, a contradiction. By assumption, we conclude that $m_{1}\in N_{1}$
and so $(m_{1},\varphi(m_{1}))\in N_{1}\Join^{\varphi}JM_{2}$, as needed.
\end{proof}

\begin{theorem}
Consider the $(R_{1}\Join^{f}J)$-module $M_{1}\Join^{\varphi}JM_{2}$ defined
as above where $JM_{2}=\left\{  0_{M_{2}}\right\}  $.
\end{theorem}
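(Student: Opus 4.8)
The plan is to run the argument of Theorem~\ref{Amalg} again, now taking full advantage of the collapse of the scalar action forced by the standing hypothesis $JM_{2}=\{0_{M_{2}}\}$. The first step is to record the basic identities available under this hypothesis: every element of $M_{1}\Join^{\varphi}JM_{2}$ has the form $(m_{1},\varphi(m_{1}))$ with $m_{1}\in M_{1}$, and for every $(r,f(r)+j)\in R_{1}\Join^{f}J$ the product simplifies to
\[
(r,f(r)+j)(m_{1},\varphi(m_{1}))=(rm_{1},\varphi(rm_{1})),
\]
since the terms $f(r)m_{2}$, $j\varphi(m_{1})$ and $jm_{2}$ all lie in $JM_{2}=\{0_{M_{2}}\}$. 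Hence $(r,f(r)+j)^{2}(m_{1},\varphi(m_{1}))=(r^{2}m_{1},\varphi(r^{2}m_{1}))$, so squares land in $N_{1}\Join^{\varphi}JM_{2}$ exactly when $r^{2}m_{1}\in N_{1}$, and in $\overline{N_{2}}^{\varphi}$ exactly when $\varphi(r^{2}m_{1})=r^{2}\varphi(m_{1})\in N_{2}$. This turns each instance of the defining condition into a statement about $r$ together with $m_{1}$ (resp. $\varphi(m_{1})$).

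The second step is to assemble the translation dictionary for the two side conditions. For the nilpotency condition I would use $(r,f(r)+j)^{k}=(r^{k},f(r^{k})+j')$ for a suitable $j'\in J$: this vanishes only if $r^{k}=0$, so $(r,f(r)+j)\in\sqrt{0_{R_{1}\Join^{f}J}}$ forces $r\in\sqrt{0_{R_{1}}}$, while Lemma~\ref{Lemma} supplies the reverse passage when $J\subseteq\sqrt{0_{R_{2}}}$. For the $n$-submodule parts I would compute $Ann_{R_{1}\Join^{f}J}(M_{1}\Join^{\varphi}JM_{2})=\{(r,f(r)+j):r\in Ann_{R_{1}}(M_{1})\}$, whence $(r,f(r)+j)\notin\sqrt{Ann_{R_{1}\Join^{f}J}(M_{1}\Join^{\varphi}JM_{2})}$ is equivalent to $r\notin\sqrt{Ann_{R_{1}}(M_{1})}$. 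With both dictionaries in place, each required implication reduces to a direct substitution: from the hypotheses on $(r,f(r)+j)$ and $(m_{1},\varphi(m_{1}))$ read off $r,m_{1}$ satisfying the premises of the assumed component property, apply it to get $rm_{1}\in N_{1}$ (resp. $r\varphi(m_{1})\in N_{2}$), and lift back through the displayed identity. For the directions that pass from a component up to the amalgamation I would always test with a witness of the precise form $(r,f(r))$, i.e. with $j=0$, so as to keep the nilpotency witness under control.

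The step I expect to be the genuine obstacle is the annihilator bookkeeping in the semi $n$-submodule parts. There the relevant condition is $Ann_{R_{1}\Join^{f}J}((m_{1},\varphi(m_{1})))=0$, and the computation $(0,j)(m_{1},\varphi(m_{1}))=(0_{M_{1}},j\varphi(m_{1}))=(0_{M_{1}},0_{M_{2}})$ shows that every $(0,j)$ with $j\in J$ already kills $(m_{1},\varphi(m_{1}))$. Thus this hypothesis is extremely restrictive once $J\neq\{0\}$, and the honest task is to decide whether it is vacuous (so that the amalgamated submodule is automatically a semi $n$-submodule) or whether it forces the situation back to $Ann_{R_{1}}(m_{1})=0$; I would settle this by unwinding $Ann_{R_{1}\Join^{f}J}((m_{1},\varphi(m_{1})))=\{(r,f(r)+j):rm_{1}=0_{M_{1}}\}$ explicitly and comparing it with the zero ideal $0_{R_{1}\Join^{f}J}$. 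The $n$-submodule parts avoid this difficulty entirely, because there the radical of the module annihilator, computed above, passes cleanly between $R_{1}\Join^{f}J$ and $R_{1}$, so those correspondences follow by the substitution scheme of the previous paragraph.
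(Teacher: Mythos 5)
Your proposal is correct and follows essentially the same route as the paper: collapse the scalar action using $JM_{2}=\{0_{M_{2}}\}$, translate the nilpotency condition via Lemma \ref{Lemma} (and its unconditional forward direction), and lift component data through witnesses of the form $(r_{1},f(r_{1}))$, $(m_{1},\varphi(m_{1}))$. The annihilator ``obstacle'' you flag resolves exactly as you suspect: unwinding gives $Ann_{R_{1}\Join^{f}J}((m_{1},\varphi(m_{1})))=\{(r,f(r)+j):rm_{1}=0_{M_{1}}\}$, which is zero precisely when $J=\{0_{R_{2}}\}$ and $Ann_{R_{1}}(m_{1})=0_{R_{1}}$, and this is how the paper proceeds --- in one direction the condition is simply used as given (it forces $Ann_{R_{1}}(m_{1})=0$, vacuously if $J\neq\{0\}$), while in the converse direction the hypothesis that $M_{2}$ is faithful, combined with $JM_{2}=\{0_{M_{2}}\}$, is exactly what forces $J=\{0_{R_{2}}\}$ so that the lifted witness has zero annihilator.
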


\begin{enumerate}
\item If $J\subseteq\sqrt{0_{R_{2}}}$ and $N_{1}$ is a semi $n$-submodule of
$M_{1}$, then $N_{1}\Join^{\varphi}JM_{2}$ is a semi $n$-submodule of
$M_{1}\Join^{\varphi}JM_{2}$.

\item If $M_{2}$ is faithful and $N_{1}\Join^{\varphi}JM_{2}$ is a semi
$n$-submodule of $M_{1}\Join^{\varphi}JM_{2}$, then $N_{1}$ is a semi
$n$-submodule of $M_{1}$.
\end{enumerate}

\begin{proof}
(1) Suppose $J\subseteq\sqrt{0_{R_{2}}}$ and $N_{1}$ is a semi $n$-submodule
of $M_{1}$. Let $(r_{1},f(r_{1})+j)\in R_{1}\Join^{f}J$ and $(m_{1}%
,\varphi(m_{1}))\in M_{1}\Join^{\varphi}JM_{2}$ such that $(r_{1}%
,f(r_{1})+j)^{2}(m_{1},\varphi(m_{1}))\in N_{1}\Join^{\varphi}JM_{2}$,
$(r_{1},f(r_{1})+j)\notin\sqrt{0_{R_{1}\Join^{f}J}}$ and $Ann_{R_{1}\Join
^{f}J}((m_{1},\varphi(m_{1})))=0_{R_{1}\Join^{f}J}$. Then $r_{1}^{2}m_{1}\in
N_{1}$ and $r_{1}\notin\sqrt{0_{R_{1}}}$ since $\sqrt{0_{R_{1}\Join^{f}J}%
}=\sqrt{0_{R_{1}}}\Join^{f}J$ by Lemma \ref{Lemma}. We show that $Ann_{R_{1}%
}(m_{1})=0_{R_{1}}$. Let $r_{1}^{\prime}\in R_{1}$ such that $r_{1}^{\prime
}m_{1}=0_{M_{1}}.$ Then, $(r_{1}^{\prime},f(r_{1}^{\prime}))(m_{1}%
,\varphi(m_{1}))=0_{M_{1}\Join^{\varphi}JM_{2}}$ and since $Ann_{R_{1}%
\Join^{f}J}((m_{1},\varphi(m_{1})))=0_{R_{1}\Join^{f}J}$, we get
$(r_{1}^{\prime},f(r_{1}^{\prime}))=0_{R_{1}\Join^{f}J}$. Thus, $r_{1}%
^{\prime}=0_{R_{1}}$and so $Ann_{R_{1}}(m_{1})=0_{R_{1}}$. It follows that
$r_{1}m_{1}\in N_{1}$ and so $(r_{1},f(r_{1})+j)(m_{1},\varphi(m_{1}))\in
N_{1}\Join^{\varphi}JM_{2}$.

(2) Suppose $M_{2}$ is faithful and $N_{1}\Join^{\varphi}JM_{2}$ is a semi
$n$-submodule of $M_{1}\Join^{\varphi}JM_{2}$. Then clearly $J=\left\{
0_{R_{2}}\right\}  $. Let $r_{1}\in R_{1}$ and $m_{1}\in M_{1}$ such that
$r_{1}^{2}m_{1}\in N_{1}$, $r_{1}\notin\sqrt{0_{R_{1}}}$ and $Ann_{R_{1}%
}(m_{1})=0_{R_{1}}$. Then $(r_{1},f(r_{1}))^{2}(m_{1},\varphi(m_{1}))\in
N_{1}\Join^{\varphi}JM_{2}$ where $(r_{1},f(r_{1}))\in R_{1}\Join^{f}J$ and
$(m_{1},\varphi(m_{1}))\in M_{1}\Join^{\varphi}JM_{2}$. Moreover, clearly
$(r_{1},f(r_{1}))\notin\sqrt{0_{R_{1}\Join^{f}J}}$. Now, let $(r_{1}^{\prime
},f(r_{1}^{\prime}))\in R_{1}\Join^{f}J$ such that $(r_{1}^{\prime}%
m_{1},\varphi(r_{1}^{\prime}m_{1}))=(r_{1}^{\prime},f(r_{1}^{\prime}%
))(m_{1},\varphi(m_{1}))=0_{M_{1}\Join^{\varphi}JM_{2}}$. Then $(r_{1}%
^{\prime},f(r_{1}^{\prime}))=(0_{R_{1}},0_{R_{2}})$ as $Ann_{R_{1}}%
(m_{1})=0_{R_{1}}$ and so $Ann_{R_{1}\Join^{f}J}((m_{1},\varphi(m_{1}%
)))=0_{R_{1}\Join^{f}J}$. By assumption, $(r_{1},f(r_{1}))(m_{1},\varphi
(m_{1}))\in N_{1}\Join^{\varphi}JM_{2}$. It follows that $r_{1}m_{1}\in N_{1}$
and $N_{1}$ is a semi $n$-submodule of $M_{1}$.
\end{proof}

\begin{corollary}
\label{Dup1}Let $N$ be a submodule of an $R$-module $M$ and $J$ be an ideal of
$R$. Then
\end{corollary}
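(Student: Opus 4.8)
The plan is to read this corollary off, with essentially no new work, as the duplication specialization of the amalgamation results just established. Recall from the discussion preceding Lemma \ref{Lemma} that on setting $R=R_{1}=R_{2}$, $M=M_{1}=M_{2}$, $f=\mathrm{Id}_{R}$ and $\varphi=\mathrm{Id}_{M}$, the amalgamated module $M_{1}\Join^{\varphi}JM_{2}$ becomes precisely the duplication $M\Join J$, while the submodule $N_{1}\Join^{\varphi}JM_{2}$ becomes $N\Join J$. Under these choices the standing hypothesis $JM_{2}=\{0_{M_{2}}\}$ reads $JM=\{0_{M}\}$, the condition $J\subseteq\sqrt{0_{R_{2}}}$ reads $J\subseteq\sqrt{0_{R}}$, and "$M_{2}$ faithful" reads "$M$ faithful".

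First I would record the corollary in its specialized form: assuming $JM=\{0_{M}\}$, part $(1)$ states that if $J\subseteq\sqrt{0_{R}}$ and $N$ is a semi $n$-submodule of $M$, then $N\Join J$ is a semi $n$-submodule of $M\Join J$, and part $(2)$ states that if $M$ is faithful and $N\Join J$ is a semi $n$-submodule of $M\Join J$, then $N$ is a semi $n$-submodule of $M$. With the two identifications above in hand, each clause becomes a verbatim instance of the corresponding clause of the preceding Theorem, so the proof is nothing more than a single invocation of that result (and, should the corollary also collect the $n$-submodule assertions for the duplication, of Theorem \ref{Amalg} in the same manner).

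I expect no genuine obstacle: all the analytic content was already carried out in the general amalgamation setting. The only steps meriting a line of checking are the identifications $M_{1}\Join^{\varphi}JM_{2}=M\Join J$ and $N_{1}\Join^{\varphi}JM_{2}=N\Join J$, both noted in the text just before Lemma \ref{Lemma}. As a consistency check on part $(2)$, I would remark that $JM=\{0_{M}\}$ together with faithfulness of $M$ forces $J=\{0_{R}\}$, so that $M\Join J$ reduces to the diagonal copy of $M$; this matches the observation made inside the proof of the preceding Theorem and confirms the conclusion.
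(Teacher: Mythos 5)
Your overall instinct---that Corollary \ref{Dup1} should be the duplication specialization of the amalgamation results---matches the motivation of the section, and it does work for the $n$-submodule statement: part (1) of the corollary ($N\Join J$ an $n$-submodule of $M\Join J$ implies $N$ an $n$-submodule of $M$, with the converse holding when $JM=0_{M}$) is a verbatim instance of Theorem \ref{Amalg} under $R=R_{1}=R_{2}$, $M=M_{1}=M_{2}$, $f=Id_{R}$, $\varphi=Id_{M}$. The gap is in part (2), whose content you have mis-reconstructed. The corollary asserts \emph{unconditionally} that if $N\Join J$ is a semi $n$-submodule of $M\Join J$ then $N$ is a semi $n$-submodule of $M$; there is no hypothesis $JM=\{0_{M}\}$ and no faithfulness hypothesis. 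Only the converse carries a hypothesis, namely $J\subseteq\sqrt{0}\cap Ann_{R}(M)$, which is exactly $J\subseteq\sqrt{0}$ together with $JM=0_{M}$, so for that direction your specialization of part (1) of the unnumbered theorem preceding the corollary is fine. But the unconditional forward implication cannot be read off from that theorem: its part (2) carries the standing hypothesis $JM_{2}=\{0_{M_{2}}\}$ plus faithfulness of $M_{2}$, and, as your own consistency check observes, under the duplication specialization these force $J=\{0_{R}\}$, so your argument covers only the trivial case in which $M\Join J$ is the diagonal copy of $M$. What you read as confirmation is in fact the symptom that your version of part (2) is nearly vacuous compared with the statement actually being proved.

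This is why the paper does not deduce the corollary from the theorems at all, but proves both parts directly by element computations in $M\Join J$. For part (2): given $r^{2}m\in N$, $r\notin\sqrt{0}$ and $Ann_{R}(m)=0$, one has $(r,r)^{2}(m,m)\in N\Join J$ and $(r,r)\notin\sqrt{0_{R\Join J}}$, and---this is the step with no analogue in the general amalgamation---$Ann_{R\Join J}((m,m))=0_{R\Join J}$ follows from $Ann_{R}(m)=0$ alone: if $(r^{\prime},r^{\prime}+j)(m,m)=(0_{M},0_{M})$, then $r^{\prime}m=0$ gives $r^{\prime}=0$, and then $jm=0$ gives $j=0$. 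In the general setting, an element $(r_{1}^{\prime},f(r_{1}^{\prime})+j^{\prime})$ annihilating $(m_{1},\varphi(m_{1})+m_{2})$ leaves the $J$-component $j^{\prime}$ uncontrolled unless one assumes something like faithfulness of $M_{2}$; in the duplication the second coordinate of the diagonal element is $m$ itself, so the single hypothesis $Ann_{R}(m)=0$ controls both coordinates. So your route legitimately yields part (1) and the converse half of part (2), but the forward half of part (2) genuinely requires the direct argument the paper gives.
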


\begin{enumerate}
\item If $N\Join J$ is an $n$-submodule of $M\Join J$, then $N$ is an
$n$-submodule of $M$. The converse is true if $JM=0_{M}$.

\item If $N\Join J$ is a semi $n$-submodule of $M\Join J$, then $N$ is a semi
$n$-submodule of $M$. The converse is true if $J\subseteq\sqrt{0}\cap
Ann_{R}(M)$.
\end{enumerate}

\begin{proof}
(1) Suppose $N\Join J$ is an $n$-submodule of $M\Join J$. Let $r\in R$ and
$m\in M$ such that $rm\in N$ and $r\notin\sqrt{Ann_{R}(M)}$. Then $(r,r)\in
R\Join J$, $(m,m)\in M\Join J$, $(r,r)(m,m)\in$ $N\Join J$ and clearly,
$(r,r)\notin\sqrt{Ann_{R\Join J}(M\Join J)}$. Since $N\Join J$ is an
$n$-submodule of $M\Join J$, then $(m,m)\in N\Join J$ and so $m\in N$ as
needed. Conversely, suppose $JM=0_{M}$ and let $(r,r+j)\in R\Join J$,
$(m,m)\in M\Join J$ such that $(r,r+j)(m,m)\in N\Join J$ and $(r,r+j)\notin
\sqrt{Ann_{R\Join J}(M\Join J)}$. Then $rm\in N$ and $r\notin\sqrt{Ann_{R}%
(M)}$. Indeed, if $r^{k}M=0_{M}$ for some $k\in%
%TCIMACRO{\U{2115} }%
%BeginExpansion
\mathbb{N}
%EndExpansion
$, then clearly, $(r,r+j)^{k}(M\Join J)=0_{M\Join J}$ as $JM=0_{M}$. Since $N$
is an $n$-submodule of $M$, then $m\in N$ and so $(m,m)\in N\Join J$.

(2) Suppose $N\Join J$ is a semi $n$-submodule of $M\Join J$. Let $r\in R$ and
$m\in M$ such that $r^{2}m\in N$, $r\notin\sqrt{0}$ and $Ann_{R}(m)=0_{R}$.
Then $(r,r)\in R\Join J$, $(m,m)\in M\Join J$ and $(r,r)^{2}(m,m)\in N\Join
J$. Moreover, clearly $(r,r)\notin\sqrt{0_{R\Join J}}$. Let $(r^{\prime
},r^{\prime}+j)\in Ann_{R\Join J}((m,m))$ so that $(r^{\prime},r^{\prime
}+j)(m,m)=(0_{M},0_{M})$. Then $(r^{\prime},r^{\prime}+j)=(0_{R},0_{R})$ since
$Ann_{R}(m)=0_{R}$. By assumption, $(r,r)(m,m)\in N\Join J$ and so $rm\in N$.
Conversely, suppose $J\subseteq\sqrt{0}\cap Ann_{R}(M)$ and $N$ is a semi
$n$-submodule of $M$. Let $(r,r+j)\in R\Join J$ and $(m,m)\in M\Join J$ such
that $(r,r+j)^{2}(m,m)\in N\Join J$, $(r,r+j)\notin\sqrt{0_{R\Join J}}$ and
$Ann_{R\Join J}(m,m)=0_{R\Join J}$. Then $r^{2}m\in N$ and $r\notin\sqrt{0}$
by Lemma \ref{Lemma}. Moreover, if $r^{\prime}m=0$ for some $r^{\prime}\in R$,
then $(r^{\prime},r^{\prime}+j)(m,m)=(0_{M},0_{M})$ as $JM=0_{M}$. Thus,
$(r^{\prime},r^{\prime}+j)=(0,0)$ and so $r^{\prime}=0$. Hence, $Ann_{R}(m)=0$
and by assumption, we conclude that $rm\in N$. Therefore, $(r,r+j)(m,m)\in
N\Join J$ and $N\Join J$ is a semi $n$-submodule of $M\Join J$.
\end{proof}

\begin{theorem}
\label{Amalg2}Consider the $(R_{1}\Join^{f}J)$-module $M_{1}\Join^{\varphi
}JM_{2}$ defined as in Theorem \ref{Amalg} and let $N_{2}$ be a submodule of
$M_{2}$.
\end{theorem}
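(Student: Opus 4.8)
The plan is to reduce everything to the second-coordinate map. A direct expansion of the scalar product shows that for $(r,f(r)+j)\in R_1\Join^f J$ and $(m_1,\varphi(m_1)+m_2)\in M_1\Join^\varphi JM_2$ one has
\[
(r,f(r)+j)(m_1,\varphi(m_1)+m_2)=\bigl(rm_1,\,(f(r)+j)(\varphi(m_1)+m_2)\bigr),
\]
using that $\varphi(rm_1)=f(r)\varphi(m_1)$. Hence the second projection $\pi_2:M_1\Join^\varphi JM_2\to M_2$, $(m_1,\varphi(m_1)+m_2)\mapsto\varphi(m_1)+m_2$, is a homomorphism of $(R_1\Join^f J)$-modules once $M_2$ is regarded as an $(R_1\Join^f J)$-module through the ring map $(r,f(r)+j)\mapsto f(r)+j$, and moreover $\overline{N_2}^{\varphi}=\pi_2^{-1}(N_2)$. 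Every statement about $\overline{N_2}^{\varphi}$ will thus be translated into a statement about $N_2$ inside $M_2$.

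For the forward implications I would take $(r_1,f(r_1)+j)\in R_1\Join^f J$ and $x=(m_1,\varphi(m_1)+m_2)$ with $(r_1,f(r_1)+j)^2 x\in\overline{N_2}^{\varphi}$ in the semi $n$-case (resp. $(r_1,f(r_1)+j)x\in\overline{N_2}^{\varphi}$ in the $n$-case), apply $\pi_2$ to obtain $(f(r_1)+j)^2(\varphi(m_1)+m_2)\in N_2$ (resp. $(f(r_1)+j)(\varphi(m_1)+m_2)\in N_2$), and then transport the side conditions. Lemma \ref{Lemma} converts $(r_1,f(r_1)+j)\notin\sqrt{0_{R_1\Join^f J}}$ into the condition on $f(r_1)+j$ that is needed downstairs, and the explicit description of $Ann_{R_1\Join^f J}(M_1\Join^\varphi JM_2)$ does the same for the $n$-submodule nilradical; the annihilator-of-element hypothesis is carried along exactly as in the proofs of Theorem \ref{Amalg} and its semi $n$-analogue.

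For the converses I would run the argument backwards: starting from $N_2$ an ($n$- or semi $n$-) submodule of $M_2$ and an element whose product with $(r_1,f(r_1)+j)$ (resp. its square) lies in $\overline{N_2}^{\varphi}$, I push through $\pi_2$, invoke the hypothesis on $N_2$ to get $\varphi(m_1)+m_2\in N_2$, and observe that since the defining condition of $\overline{N_2}^{\varphi}$ constrains only the second coordinate, this already places $x$ in $\overline{N_2}^{\varphi}$. As in the two preceding theorems, a supplementary hypothesis (such as $JM_2=\{0_{M_2}\}$, $J\subseteq\sqrt{0_{R_2}}$, or faithfulness of $M_2$) will be needed so that the nilradical and annihilator conditions upstairs genuinely force the corresponding ones on $f(r_1)+j$ in $R_2$.

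The step I expect to be the main obstacle is the careful bookkeeping of the two nilradicals and two annihilators living in the different rings $R_1\Join^f J$ and $R_2$, together with controlling the image $\pi_2(M_1\Join^\varphi JM_2)=\varphi(M_1)+JM_2$: the forward direction can only ``see'' elements of $M_2$ lying in this image and scalars of the form $f(r_1)+j$, so surjectivity-type hypotheses on $\varphi$ and $f$ (or their failure) dictate precisely how much of the $n$- / semi $n$-property of $N_2$ one can recover. Lemma \ref{Lemma} disposes of the nilradical translation cleanly, but pinning down the exact extra hypotheses under which the annihilator conditions descend is where the real work lies, mirroring the role played by $JM_2=\{0_{M_2}\}$ and the faithfulness of $M_2$ in Theorem \ref{Amalg} and its successor.
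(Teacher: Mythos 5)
Your reduction to the second projection is correct and worthwhile: since $\varphi(rm_1)=f(r)\varphi(m_1)$, the second coordinate of $(r,f(r)+j)(m_1,\varphi(m_1)+m_2)$ is indeed $(f(r)+j)(\varphi(m_1)+m_2)$, so $\pi_2$ is linear over the ring map $(r,f(r)+j)\mapsto f(r)+j$ and $\overline{N_2}^{\varphi}=\pi_2^{-1}(N_2)$; this packages the elementwise computations the paper performs. Your first paragraph, together with the transfer of $(r_1,f(r_1)+j)\notin\sqrt{Ann_{R_1\Join^f J}(M_1\Join^{\varphi}JM_2)}$ into $f(r_1)+j\notin\sqrt{Ann_{R_2}(M_2)}$ (which is exactly where $JM_2=\{0_{M_2}\}$ and the injectivity of $\varphi$ enter; the paper has no explicit description of this annihilator for general amalgamations, only for duplications), is in substance the paper's proof of part (1).

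The genuine gap is that parts (2) and (3) are never addressed. Your paragraph on ``the converses'' restates the same implication as your first paragraph: it assumes $N_2$ is an ($n$- or semi $n$-) submodule and concludes $x\in\overline{N_2}^{\varphi}$, i.e.\ it again proves a statement of the form ``$N_2$ good $\Rightarrow\overline{N_2}^{\varphi}$ good.'' Parts (2) and (3) go the opposite way, and they cannot be obtained by pushing down through $\pi_2$: one must start with arbitrary $r_2\in R_2$ and $m_2\in M_2$ with $r_2m_2\in N_2$ (resp.\ $r_2^2m_2\in N_2$ and the semi $n$ side conditions) and \emph{lift} them, writing $r_2=f(r_1)$ and $m_2=\varphi(m_1)$ by surjectivity of $f$ and $\varphi$, so that $(r_1,r_2)(m_1,m_2)$ (resp.\ its square analogue) lands in $\overline{N_2}^{\varphi}$. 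The substantive step your proposal omits entirely is verifying that this lifted data satisfies the hypotheses upstairs, without which the assumption on $\overline{N_2}^{\varphi}$ cannot be applied at all: in (2), that $(r_1,r_2)\notin\sqrt{Ann_{R_1\Join^f J}(M_1\Join^{\varphi}JM_2)}$, which holds because if $(r_1,r_2)^k$ annihilated $M_1\Join^{\varphi}JM_2$ then $r_2^k$ would annihilate $\varphi(M_1)=M_2$ --- so surjectivity of $\varphi$ is used here in an essential computation, not merely to ``see'' enough elements; and in (3), that $Ann_{R_1\Join^f J}((m_1,m_2))=0_{R_1\Join^f J}$, which follows from $Ann_{R_2}(m_2)=0_{R_2}$ applied to both $(f(r_1')+j')m_2=0$ and $f(r_1')m_2=\varphi(r_1'm_1)=0$, together with injectivity of $f$ to kill the first coordinate. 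Your closing remark that surjectivity ``dictates how much one can recover'' points at the right hypotheses, but it is not a substitute for this lifting argument, so two of the three assertions of the theorem remain unproved.
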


\begin{enumerate}
\item If $N_{2}$ is an $n$-submodule of $M_{2}$, $JM_{2}=\left\{  0_{M_{2}%
}\right\}  $ and $\varphi$ is an isomorphism, then $\overline{N_{2}}^{\varphi
}$ is an $n$-submodule of $M_{1}\Join^{\varphi}JM_{2}$.

\item If $f$ and $\varphi$ are epimorphisms and $\overline{N_{2}}^{\varphi}$
is an $n$-submodule of $M_{1}\Join^{\varphi}JM_{2}$, then $N_{2}$ is an
$n$-submodule of $M_{2}$.

\item If $f$ is an isomorphism, $\varphi$ is an epimorphism and $\overline
{N_{2}}^{\varphi}$ is a semi $n$-submodule of $M_{1}\Join^{\varphi}JM_{2}$,
then $N_{2}$ is a semi $n$-submodule of $M_{2}$.
\end{enumerate}

\begin{proof}
(1) Suppose $N_{2}$ is an $n$-submodule of $M_{2}$. Suppose $\overline{N_{2}%
}^{\varphi}=M_{1}\Join JM_{2}$ and let $m_{2}=\varphi(m_{1})\in M_{2}$. Then
$(m_{1},m_{2})\in M_{1}\Join JM_{2}=\overline{N_{2}}^{\varphi}$ and so
$m_{2}\in N_{2}$. Thus, $N_{2}=M_{2}$, a contradiction. Therefore,
$\overline{N_{2}}^{\varphi}$ is proper in $M_{1}\Join JM_{2}$. Let
$(r_{1},f(r_{1})+j)\in R_{1}\Join^{f}J$ and $(m_{1},\varphi(m_{1})+m_{2})\in
M_{1}\Join JM_{2}$ such that $(r_{1},f(r_{1})+j)(m_{1},\varphi(m_{1}%
)+m_{2})\in\overline{N_{2}}^{\varphi}$ and $(r_{1},f(r_{1})+j)\notin
\sqrt{Ann_{R_{1}\Join^{f}J}(M_{1}\Join^{\varphi}JM_{2})}$. Then $(f(r_{1}%
)+j)(\varphi(m_{1})+m_{2})\in N_{2}$ and we prove that $f(r_{1})+j\notin
\sqrt{Ann_{R_{2}}(M_{2})}$. Suppose on the contrary that $(f(r_{1}%
)+j)^{k}M_{2}=0_{M_{2}}$ for some $k\in%
%TCIMACRO{\U{2115} }%
%BeginExpansion
\mathbb{N}
%EndExpansion
$ and let $(m_{1}^{\prime},\varphi(m_{1}^{\prime})+m_{2}^{\prime})\in
M_{1}\Join^{\varphi}JM_{2}$. Then $(f(r_{1})+j)^{k}\varphi(m_{1}^{\prime
})=\varphi(r_{1}^{k}m_{1}^{\prime})+j^{\prime}\varphi(m_{1}^{\prime}%
)=0_{M_{2}}$\ for some $j^{\prime}\in J$ and so $r_{1}^{k}m_{1}^{\prime
}=0_{M_{1}}$ since $JM_{2}=0_{M_{2}}$ and $\varphi$ is one to one. Thus,
$(r_{1},f(r_{1})+j)^{k}(m_{1}^{\prime},\varphi(m_{1}^{\prime})+m_{2}^{\prime
})=0_{M_{1}\Join^{\varphi}JM_{2}}$ which is a contradiction. By assumption, we
have $\varphi(m_{1})+m_{2}\in N_{2}$ and so $(m_{1},\varphi(m_{1})+m_{2}%
)\in\overline{N_{2}}^{\varphi}$.

(2) Suppose $f$ and $\varphi$ are epimorphisms and $\overline{N_{2}}^{\varphi
}$ is an $n$-submodule of $M_{1}\Join^{\varphi}JM_{2}$. Clearly, $N_{2}$ is
proper in $M_{2}$. Let $r_{2}=f(r_{1})\in R_{2}$ and $m_{2}=\varphi(m_{1})\in
M_{2}$ such that $r_{2}m_{2}\in N_{2}$ and $r_{2}\notin\sqrt{Ann_{R_{2}}%
(M_{2})}$. Then $(r_{1},r_{2})\in R_{1}\Join^{f}J$, $(m_{1},m_{2})\in
M_{1}\Join^{\varphi}JM_{2}$ and $(r_{1},r_{2})(m_{1},m_{2})\in\overline{N_{2}%
}^{\varphi}$. Suppose on contrary that $(r_{1},r_{2})\in\sqrt{Ann_{R_{1}%
\Join^{f}J}(M_{1}\Join^{\varphi}JM_{2})}$ so that $(r_{1},r_{2})^{k}%
(M_{1}\Join^{\varphi}JM_{2})=0_{M_{1}\Join^{\varphi}JM_{2}}$ for some $k\in%
%TCIMACRO{\U{2115} }%
%BeginExpansion
\mathbb{N}
%EndExpansion
$. Let $m_{2}^{\prime}=\varphi(m_{1}^{\prime})\in M_{2}$. Then $(r_{1}%
,r_{2})^{k}(m_{1}^{\prime},m_{2}^{\prime})=0_{M_{1}\Join^{\varphi}JM_{2}}$ and
so $r_{2}^{k}m_{2}^{\prime}=0_{M_{2}}$. Thus, $r_{2}\notin\sqrt{Ann_{R_{2}%
}(M_{2})}$ which is a contradiction. Therefore, $(r_{1},r_{2})\notin
\sqrt{Ann_{R_{1}\Join^{f}J}(M_{1}\Join^{\varphi}JM_{2})}$ and by assumption,
we have $(m_{1},m_{2})\in\overline{N_{2}}^{\varphi}$. It follows that
$m_{2}\in N_{2}$ as needed.

(3) Similar to the proof of (2).
\end{proof}

\begin{theorem}
Consider the $(R_{1}\Join^{f}J)$-module $M_{1}\Join^{\varphi}JM_{2}$ defined
as in Theorem \ref{Amalg} where $f$ is an isomorphism and $\varphi$ is an
epimorphism. Let $N_{2}$ be a submodule of $M_{2}$.
\end{theorem}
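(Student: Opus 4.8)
The plan is to treat $\overline{N_2}^{\varphi}$ as the preimage of $N_2$ under the second-coordinate map $\pi:M_1\Join^{\varphi}JM_2\to M_2$, $(m_1,\varphi(m_1)+m_2)\mapsto\varphi(m_1)+m_2$. The point is that the second component of $(r,f(r)+j)(m_1,\varphi(m_1)+m_2)$ equals $(f(r)+j)(\varphi(m_1)+m_2)$, so $\pi$ intertwines the $(R_1\Join^{f}J)$-action with multiplication by $f(r)+j$ in $R_2$; under this $\overline{N_2}^{\varphi}=\pi^{-1}(N_2)$, and every membership question in $\overline{N_2}^{\varphi}$ becomes a membership question of the second coordinate in $N_2$. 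Because $f$ is an isomorphism and $\varphi$ an epimorphism, the ring projection $(r,f(r)+j)\mapsto f(r)+j$ is surjective and every $n_2\in M_2$ occurs as a second coordinate, so I can pass freely between the two modules. I would first record properness: since $N_2\neq M_2$, pick $m_2\in M_2\setminus N_2$, write $m_2=\varphi(m_1)$ using surjectivity of $\varphi$, and observe $(m_1,\varphi(m_1))\in M_1\Join^{\varphi}JM_2\setminus\overline{N_2}^{\varphi}$.

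Next I would translate the hypotheses. Given $(r,f(r)+j)^{2}(m_1,\varphi(m_1)+m_2)\in\overline{N_2}^{\varphi}$, reading off the second coordinate yields $r_2^{2}n_2\in N_2$ with $r_2=f(r)+j$ and $n_2=\varphi(m_1)+m_2$. The nilpotency hypothesis $(r,f(r)+j)\notin\sqrt{0_{R_1\Join^{f}J}}$ I would convert to $r_2\notin\sqrt{0_{R_2}}$: assuming $J\subseteq\sqrt{0_{R_2}}$, Lemma \ref{Lemma} gives $\sqrt{0_{R_1\Join^{f}J}}=\sqrt{0_{R_1}}\Join^{f}J$, so $r\notin\sqrt{0_{R_1}}$ and hence $f(r)\notin\sqrt{0_{R_2}}$; since $j\in J\subseteq\sqrt{0_{R_2}}$ and $\sqrt{0_{R_2}}$ is an ideal, $r_2=f(r)+j\notin\sqrt{0_{R_2}}$, because otherwise $f(r)=r_2-j$ would be nilpotent.

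The main obstacle is transferring the annihilator condition. I expect to use the identity $Ann_{R_1\Join^{f}J}((m_1,n_2))=\{(a,f(a)+j'):am_1=0\text{ and }(f(a)+j')n_2=0\}$. To deduce $Ann_{R_2}(n_2)=0$ from $Ann_{R_1\Join^{f}J}((m_1,n_2))=0$, I take $s\in R_2$ with $sn_2=0$, write $s=f(a)$ via the isomorphism $f$, and must manufacture a nonzero annihilator of $(m_1,n_2)$ unless $s=0$. The difficulty is that $sn_2=0$ does not on its own force $am_1=0$; this is exactly where the remaining hypotheses must enter (for instance $JM_2=\{0_{M_2}\}$ together with injectivity of $\varphi$, or faithfulness of $M_2$), since then $n_2=\varphi(m_1)$ and $f(a)\varphi(m_1)=\varphi(am_1)=0$ gives $am_1=0$, so $(a,f(a))$ annihilates $(m_1,n_2)$ and the hypothesis forces $a=0$, i.e.\ $s=0$. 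With $Ann_{R_2}(n_2)=0$ secured, applying that $N_2$ is a semi $n$-submodule of $M_2$ to $r_2^{2}n_2\in N_2$ gives $r_2n_2\in N_2$, that is $(r,f(r)+j)(m_1,\varphi(m_1)+m_2)\in\overline{N_2}^{\varphi}$. The $n$-submodule claim is analogous but easier, since there one compares the module annihilators $Ann_{R_1\Join^{f}J}(M_1\Join^{\varphi}JM_2)$ and $Ann_{R_2}(M_2)$ rather than pointwise annihilators, and this comparison passes directly through surjectivity of $f$ and $\varphi$.
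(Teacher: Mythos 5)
Your treatment of part (2) --- the direction ``$N_{2}$ semi $n$-submodule $\Rightarrow\overline{N_{2}}^{\varphi}$ semi $n$-submodule'' --- follows exactly the paper's route: properness of $\overline{N_{2}}^{\varphi}$ via surjectivity of $\varphi$, transfer of non-nilpotency via Lemma \ref{Lemma} and $J\subseteq\sqrt{0_{R_{2}}}$, and a pointwise transfer of annihilator conditions along the second coordinate. The obstacle you flag at the annihilator step is genuine, and you have put your finger on precisely the spot where the paper's own proof uses an unstated hypothesis: to deduce $Ann_{R_{2}}(\varphi(m_{1}))=0_{R_{2}}$ from $Ann_{R_{1}\Join^{f}J}((m_{1},\varphi(m_{1})))=0$, the paper writes ``and so $r_{1}m_{1}=0_{M_{1}}$ as $\varphi$ is one to one,'' although the theorem assumes only that $\varphi$ is an epimorphism. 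This gap cannot be closed from the stated hypotheses; part (2) is in fact false as stated. Take $R_{1}=R_{2}=\mathbb{Z}$, $f=Id_{\mathbb{Z}}$, $J=0$, $M_{1}=\mathbb{Z}$, $M_{2}=\mathbb{Z}_{8}$, $\varphi$ the canonical epimorphism, and $N_{2}=\langle\bar{4}\rangle$. Then $N_{2}$ is vacuously a semi $n$-submodule of $M_{2}$ (every element of $\mathbb{Z}_{8}$ has nonzero annihilator), while $\overline{N_{2}}^{\varphi}=\{(m,\bar{m}):m\in4\mathbb{Z}\}$ is not one: $(2,2)^{2}(1,\bar{1})=(4,\bar{4})\in\overline{N_{2}}^{\varphi}$, $(2,2)\notin\sqrt{0}$, $Ann((1,\bar{1}))=0$, yet $(2,2)(1,\bar{1})=(2,\bar{2})\notin\overline{N_{2}}^{\varphi}$. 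So your hesitation is the correct mathematical judgement. One caveat: your parenthetical alternative ``or faithfulness of $M_{2}$'' does not repair the step (take $M_{1}=\mathbb{Z}\times\mathbb{Z}$, $M_{2}=\mathbb{Z}\times\mathbb{Z}_{8}$, $N_{2}=0\times\langle\bar{4}\rangle$, with $M_{2}$ faithful, and the same failure occurs); only injectivity of $\varphi$, which the paper tacitly assumes, makes the transfer work.

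The genuine gap on your side is part (1): if $\overline{N_{2}}^{\varphi}$ is a semi $n$-submodule of $M_{1}\Join^{\varphi}JM_{2}$, then $N_{2}$ is a semi $n$-submodule of $M_{2}$. Your closing sentence about ``the $n$-submodule claim'' being analogous but easier describes Theorem \ref{Amalg2}, not the present theorem, and it leaves part (1) unproved. Moreover, part (1) is not a mirror image of part (2): the annihilator transfer runs in the opposite direction and needs a different device, namely pushing the first coordinate forward through $\varphi$. Given $r_{2}^{2}m_{2}\in N_{2}$, $r_{2}\notin\sqrt{0_{R_{2}}}$ and $Ann_{R_{2}}(m_{2})=0_{R_{2}}$, write $r_{2}=f(r_{1})$, $m_{2}=\varphi(m_{1})$ and lift to $(r_{1},r_{2})\in R_{1}\Join^{f}J$ and $(m_{1},m_{2})\in M_{1}\Join^{\varphi}JM_{2}$. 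If $(r_{1}^{\prime},f(r_{1}^{\prime})+j^{\prime})(m_{1},m_{2})=(r_{1}^{\prime}m_{1},(f(r_{1}^{\prime})+j^{\prime})m_{2})=0$, then besides $(f(r_{1}^{\prime})+j^{\prime})m_{2}=0$ one also gets $f(r_{1}^{\prime})m_{2}=\varphi(r_{1}^{\prime}m_{1})=0$, so both $f(r_{1}^{\prime})+j^{\prime}$ and $f(r_{1}^{\prime})$ lie in $Ann_{R_{2}}(m_{2})=0$; injectivity of $f$ then gives $r_{1}^{\prime}=0$ and $j^{\prime}=0$, whence $Ann_{R_{1}\Join^{f}J}((m_{1},m_{2}))=0$ and the semi $n$-property of $\overline{N_{2}}^{\varphi}$ yields $r_{2}m_{2}\in N_{2}$. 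This direction holds under exactly the stated hypotheses (with no condition on $J$ at all), so it is the part of the theorem that is actually correct as stated --- and it is the part your proposal never addresses.
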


\begin{enumerate}
\item If $\overline{N_{2}}^{\varphi}$ is a semi $n$-submodule of $M_{1}%
\Join^{\varphi}JM_{2}$, then $N_{2}$ is a semi $n$-submodule of $M_{2}$.

\item If $J\subseteq\sqrt{0}\cap Ann_{R}(M)$ and $N_{2}$ is a semi
$n$-submodule of $M_{2}$, then $\overline{N_{2}}^{\varphi}$ is a semi
$n$-submodule of $M_{1}\Join^{\varphi}JM_{2}$.
\end{enumerate}

\begin{proof}
(1) Suppose $\overline{N_{2}}^{\varphi}$ is a semi $n$-submodule of
$M_{1}\Join^{\varphi}JM_{2}$. Let $r_{2}=f(r_{1})\in R_{2}$ and $m_{2}%
=\varphi(m_{1})\in M_{2}$ such that $r_{2}^{2}m_{2}\in N_{2}$, $r_{2}%
\notin\sqrt{0_{R_{2}}}$ and $Ann_{R_{2}}(m_{2})=0_{R_{2}}$. Then $(r_{1}%
,r_{2})^{2}(m_{1},m_{2})\in\overline{N_{2}}^{\varphi}$ where $(r_{1},r_{2})\in
R_{1}\Join^{f}J$, $(m_{1},m_{2})\in M_{1}\Join^{\varphi}JM_{2}$ and clearly
$(r_{1},r_{2})\notin\sqrt{0_{R_{1}\Join^{f}J}}$. We prove that $Ann_{R_{1}%
\Join^{f}J}((m_{1},m_{2}))=0_{R_{1}\Join^{f}J}$. Let $(r_{1}^{\prime}%
,f(r_{1}^{\prime})+j^{\prime})\in R_{1}\Join^{f}J$ such that $(r_{1}^{\prime
},f(r_{1}^{\prime})+j^{\prime})(m_{1},m_{2})=0_{M_{1}\Join^{\varphi}JM_{2}}$.
Then $r_{1}^{\prime}m_{1}=0_{M_{1}}$ and $(f(r_{1}^{\prime})+j^{\prime}%
)m_{2}=0_{M_{2}}=f(r_{1}^{\prime})m_{2}=0_{M_{2}}$ and so $(f(r_{1}^{\prime
})+j^{\prime})=f(r_{1}^{\prime})=0_{R_{2}}$ as $Ann_{R_{2}}(m_{2})=0_{R_{2}}$.
Since $f$ is one to one, then $r_{1}^{\prime}=0_{R_{1}}$ and so $(r_{1}%
^{\prime},f(r_{1}^{\prime})+j^{\prime})=0_{R_{1}\Join^{f}J}$ as needed. By
assumption, $(r_{1},r_{2}))(m_{1},m_{2})\in\overline{N_{2}}^{\varphi}$ and so
$r_{2}m_{2}\in N_{2}$. Therefore, $N_{2}$ is a semi $n$-submodule of $M_{2}$.

(2) Let $(r_{1},f(r_{1})+j)\in R_{1}\Join^{f}J$ and $(m_{1},\varphi(m_{1}))\in
M_{1}\Join^{\varphi}JM_{2}$ such that $(r_{1},f(r_{1})+j)^{2}(m_{1}%
,\varphi(m_{1}))\in\overline{N_{2}}^{\varphi}$, $(r_{1},f(r_{1})+j)\notin
\sqrt{0_{R_{1}\Join^{f}J}}$ and $Ann_{R_{1}\Join^{f}J}((m_{1},\varphi
(m_{1})))=0_{R_{1}\Join^{f}J}$. Then $(f(r_{1})+j)^{2}\varphi(m_{1})\in N_{2}%
$. Suppose on contrary that $f(r_{1})+j\in\sqrt{0_{R_{2}}}$. Then $f(r_{1}%
)\in\sqrt{0_{R_{2}}}$ as $J\subseteq\sqrt{0_{R_{2}}}$. Since $f$ is one to
one, then $r_{1}\in\sqrt{0_{R_{1}}}$ and so $(r_{1},f(r_{1})+j)\in
\sqrt{0_{R_{1}\Join^{f}J}}$, a contradiction. Therefore, $f(r_{1}%
)+j\notin\sqrt{0_{R_{2}}}$. Moreover, we prove that $Ann_{R_{2}}(\varphi
(m_{1}))=0_{R_{2}}$. Suppose $r_{2}\varphi(m_{1})=0_{M_{2}}$ for
$r_{2}=f(r_{1})\in R_{2}$. Then $\varphi(r_{1}m_{1})=0_{M_{2}}$ and so
$r_{1}m_{1}=0_{M_{1}}$ as $\varphi$ is one to one. Thus, $(r_{1},r_{2}%
)(m_{1},\varphi(m_{1}))=0_{M_{1}\Join^{\varphi}JM_{2}}$ and by assumption,
$(r_{1},r_{2})=0_{R_{1}\Join^{f}J}$. It follows that $r_{2}=0_{R_{2}}$ and
$Ann_{R_{2}}(\varphi(m_{1}))=0_{R_{2}}$. Since $N_{2}$ is\ a semi
$n$-submodule of $M_{2}$, then $(f(r_{1})+j)\varphi(m_{1})\in N_{2}$ and so
$(r_{1},f(r_{1})+j)(m_{1},\varphi(m_{1}))\in\overline{N_{2}}^{\varphi}$.
\end{proof}

\begin{corollary}
\label{Dup2}Let $N$ be a submodule of an $R$-module $M$ and $J$ be an ideal of
$R$. Then
\end{corollary}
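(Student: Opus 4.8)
The plan is to read Corollary \ref{Dup2} as the $\bar{N}$-analogue of Corollary \ref{Dup1}: it should assert that (1) $\bar{N}$ an $n$-submodule of $M\Join J$ forces $N$ to be an $n$-submodule of $M$, with the converse holding when $JM=0_{M}$, and (2) $\bar{N}$ a semi $n$-submodule of $M\Join J$ forces $N$ to be a semi $n$-submodule of $M$, with the converse holding when $J\subseteq\sqrt{0}\cap Ann_{R}(M)$. The quickest route is to specialize the two preceding theorems to the duplication by setting $R_{1}=R_{2}=R$, $M_{1}=M_{2}=M$, $f=Id_{R}$ and $\varphi=Id_{M}$, so that $M_{1}\Join^{\varphi}JM_{2}=M\Join J$, $\overline{N_{2}}^{\varphi}=\bar{N}$, $JM_{2}=JM$, and both $f,\varphi$ become isomorphisms. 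Each implication then falls out of a part already established: the forward half of (1) is Theorem \ref{Amalg2}(2) (valid since $f,\varphi$ are epimorphisms, no hypothesis on $J$); its converse is Theorem \ref{Amalg2}(1), whose requirement $JM_{2}=\{0_{M_{2}}\}$ reads as $JM=0_{M}$ and whose requirement that $\varphi$ be an isomorphism is automatic; the forward half of (2) is Theorem \ref{Amalg2}(3) (equivalently part (1) of the theorem immediately preceding this corollary); and its converse is part (2) of that same theorem, whose hypothesis $J\subseteq\sqrt{0}\cap Ann_{R}(M)$ transfers verbatim.

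Alternatively, to match the self-contained style used for Corollary \ref{Dup1}, I would argue directly. For the forward implications I would take $r\in R$, $m\in M$ that witness a would-be failure of the ($n$- or semi $n$-) property in $M$, lift them to the diagonal $(r,r)\in R\Join J$ and $(m,m)\in M\Join J$, observe that the relevant product ($(r,r)(m,m)$ in the $n$-case, $(r,r)^{2}(m,m)$ in the semi $n$-case) equals $(rm,rm)$ or $(r^{2}m,r^{2}m)$ and hence lies in $\bar{N}$, and transfer the genericity hypothesis: in the $n$-case that $(r,r)\notin\sqrt{Ann_{R\Join J}(M\Join J)}$ (a nilpotence of $(r,r)$ on $M\Join J$ would force $r^{k}M=0_{M}$ by evaluating on diagonals), and in the semi $n$-case that $(r,r)\notin\sqrt{0_{R\Join J}}$ and $Ann_{R\Join J}((m,m))=0_{R\Join J}$; the hypothesis on $\bar{N}$ then returns $(m,m)\in\bar{N}$, i.e.\ $m\in N$. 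For the converse implications I would feed a general $(r,r+j)\in R\Join J$ and diagonal $(m,m)\in M\Join J$ into the definition for $\bar{N}$, use $J\subseteq Ann_{R}(M)$ to get $JM=0_{M}$ (which makes $M\Join J$ diagonal and kills the $j$-terms), invoke Lemma \ref{Lemma} via $J\subseteq\sqrt{0}$ to identify $\sqrt{0_{R\Join J}}=\sqrt{0}\Join J$ and thereby read $(r,r+j)\notin\sqrt{0_{R\Join J}}$ as $r\notin\sqrt{0}$, and finally apply the semi $n$-property of $N$ to conclude $rm\in N$ and hence $(r,r+j)(m,m)\in\bar{N}$.

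The obstacles here are bookkeeping rather than conceptual. The genuinely load-bearing input is Lemma \ref{Lemma}, which pins down $\sqrt{0_{R\Join J}}$ and explains why $J\subseteq\sqrt{0}$ appears in the converse of (2). The annihilator translations are the most calculation-heavy step, but each collapses to one line once $JM=0_{M}$ and the injectivity of $f$ and $\varphi$ are in hand: in the forward direction $Ann_{R}(m)=0$ yields $Ann_{R\Join J}((m,m))=0_{R\Join J}$, while in the converse one should note that, when $J\neq0$, the elements $(0,j)$ with $j\in J$ already annihilate every diagonal $(m,m)$, so the hypothesis $Ann_{R\Join J}((m,m))=0_{R\Join J}$ is vacuous and the converse of (2) holds trivially; the argument nonetheless remains formally valid, exactly mirroring the converse in Corollary \ref{Dup1}.
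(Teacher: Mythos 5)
Your reconstruction of the two-part statement is exactly right, and both of your arguments go through. Note, though, that the paper itself takes what you present as the \emph{alternative} route: its entire proof of Corollary \ref{Dup2} is the remark that it is ``similar to that of Corollary \ref{Dup1}'', i.e., the direct diagonal argument you sketch second (lift $r,m$ to $(r,r)$ and $(m,m)$, transfer the non-nilpotence and annihilator hypotheses coordinatewise, and use Lemma \ref{Lemma} together with $JM=0_{M}$ for the converses). Your primary route --- specializing Theorem \ref{Amalg2} and the theorem following it via $R_{1}=R_{2}=R$, $M_{1}=M_{2}=M$, $f=Id_{R}$, $\varphi=Id_{M}$, so that $\overline{N_{2}}^{\varphi}=\bar{N}$, $JM_{2}=JM$, and the epimorphism/isomorphism hypotheses become automatic --- is genuinely different and arguably the more natural reading of the word ``corollary''; the paper even licenses it explicitly where it observes that the duplication is the special case of the amalgamation with $f=Id_{R}$ and $\varphi=Id_{M}$. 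What the specialization buys is brevity and no new computation; what the direct argument buys is self-containedness, and it is in fact forced in the companion result: the analogous shortcut would \emph{not} fully work for Corollary \ref{Dup1}(2), since the relevant theorem there assumes both $M_{2}$ faithful and $JM_{2}=0$, forcing $J=0$ --- presumably why the paper proves Corollary \ref{Dup1} directly and then refers Corollary \ref{Dup2} back to it. Finally, your closing observation is correct and sharper than anything the paper says: when $J\subseteq Ann_{R}(M)$ and $J\neq0$, every element $(0,j)$ of $R\Join J$ annihilates every element of the (then diagonal) module $M\Join J$, so the hypothesis $Ann_{R\Join J}((m,m))=0_{R\Join J}$ is unsatisfiable and the converse in part (2) holds vacuously; this does not affect the validity of the statement, but it does show the converse is only substantive in the degenerate case $J=0$.
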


\begin{enumerate}
\item If $\bar{N}$ is an $n$-submodule of $M\Join J$, then $N$ is an
$n$-submodule of $M$. The converse is true if $JM=0_{M}$.

\item If $\bar{N}$ is a semi $n$-submodule of $M\Join J$, then $N$ is a semi
$n$-submodule of $M$. The converse is true if $J\subseteq\sqrt{0}\cap
Ann_{R}(M)$.
\end{enumerate}

\begin{proof}
The proof is similar to that of Corollary \ref{Dup1} and left to the reader.
\end{proof}

\medskip

\textbf{Conflicts of Interest}

The authors have NO affiliations with or involvement in any organization or
entity with any financial interest or non-financial interest in the subject
matter or materials discussed in this manuscript.\medskip

\end{document}